 \def\@makefnmark{%
 \leavevmode
 \raise.9ex\hbox{\check@mathfonts
 \fontsize\sf@size\z@\normalfont%
 \@thefnmark}%
 }
\newcommand\diam{\operatorname{diam}}
\theoremstyle{definition}
\newtheorem{clm}{Claim}
\newtheorem{lem}{Lemma}[section]
\newtheorem{prop}[lem]{Proposition}
\newtheorem{thm}[lem]{Theorem}
\newtheorem{cor}[lem]{Corollary}
\newtheorem{ex}[lem]{Example}
\newcounter{cn}
\title{An indecomposable continuum as subpower Higson corona}
\author{Yutaka Iwamoto}
\begin{document}
\begin{abstract}
In this paper, we study topological properties of the subpower Higson coronas
 of proper metric spaces 
 and show that the subpower Higson corona of the half open interval
 with the usual metric
 is an indecomposable continuum.
Continuous surjections from Higson-type coronas onto a Higson-type compactifications of the half open interval are also constructed.
\end{abstract}
\maketitle
\renewcommand{\thefootnote}{\fnsymbol{footnote}}
\footnote[0]{2010 {\it Mathematics Subject Classification.}
Primary 54D40; Secondary 54C45, 54D05, 54E40.}
\footnote[0]{{\it Keywords}: Higson corona; indecomposable continuum; Stone-\v{C}ech compactification}
\section{Introduction}
Let $(X, d)$ be a metric space with a metric $d$ and let $B_{d}(x,r)$
 denote the closed ball of radius $r$ centered at $x\in X$.
A metric $d$ on $X$ is called {\it proper} if all balls $B_{d}(x, r)$ are compact.
The Higson compactification is a compactification
 defined by the coarse structure of a proper metric space
 and plays an important role in the large-scale geometry \cite{Roe}.
Also the sublinear Higson compactification is known as a compactification
 defined by the sublinear coarse structure of a proper metric space
 \cite{CDSV}, \cite{DS}.
There are several ways to define a Higson type compactification,
 by a coarse structure  \cite{DKU}, \cite{DS}, \cite{Roe},
 by a large scale structure \cite{DH},
 and
 by a closed subring of the algebra
 of all continuous bounded real-valued functions \cite{Keesling}, \cite{KZ1}.
The subpower Higson compactification was introduced in \cite{KZ1}
 as a compactification defined by a closed subring of the algebra of all continuous bounded real-valued functions.
And the asymptotic power dimension was studied in \cite{KZ2}
 related to the subpower Higson corona.

In this paper, we adopt the definition associated with a closed subrings
 of the algebra of all continuous bounded real-valued functions
 and study topological properties of the subpower Higson compactifications and their coronas.
It is known that a Higson-type corona $\nu X$ can be realized by a discrete subspace of
 a proper metric space $X$.
The Higson corona and the subpower Higson corona contain a copy of
 the Stone-\v{C}ech remainder $\beta \mathbb N \setminus \mathbb N$
 of the natural numbers $\mathbb N$
 \cite{Keesling}, \cite{KZ1}.
Also it was shown that
 the Higson corona of the half open interval $[0, \infty)$ 
 with the usual metric is,
 like the Stone-\v{C}ech remainder $\beta [0,\infty) \setminus [0,\infty)$ \cite{Bellamy},
 an indecomposable continuum \cite{IT}.
We show that the subpower Higson corona of the half open interval with the usual metric
 is also an indecomposable continuum.
Then we construct continuous surjections from Higson-type coronas
 onto Higson-type compactifications of the half open interval.
%%%%%%%%%%%%%%%%%%%%%%%%%%%%%%%%%%%%%%%%%%%%%%%%%%%%%%%%%%%%%%%%%%%%%%%%%%%%%%%%%%

\section{Basic properties of the subpower Higson compactification}

A (not necessarily continuous) function $s: \mathbb R_+ \to \mathbb R_+$
 between the set of positive real numbers
 is called {\it asymptotically subpower}
 (resp. {\it asymptotically sublinear})
 if for every $\alpha>0$
 there exists $t_0 >0$ such that $s(t)<t^{\alpha}$
 (resp. $s(t)<\alpha t$)
 whenever $t>t_0$.
The set of all asymptotically subpower functions
 (resp. all asymptotically sublinear functions, all positive constant functions)
  is denoted by $\mathcal{P}$ (resp. $\mathcal{L}$, $\mathcal{H}$).
Since every constant functions are asymptotically subpower
 and every asymptotically subpower functions are asymptotically sublinear,
 it follows that $\mathcal{H}\subset \mathcal{P}\subset \mathcal{L}$.

In what follows,
 a metric space $(X,d)$ is assumed to have a base point $x_0$,
 and the distance $d(x_0, x)$ of $x\in X$ from $x_0$ is denoted by $|x|$.
For a subset $A$ of $X$, the diameter of $A$ is denoted by $\diam_{d} A$,
 that is, $\diam_{d} A=\sup \{ d(x,y)\mid x,y\in A\}$.

Let $(X,d)$ and $(Y,\rho )$ be proper metric spaces.
A map $f: X\to Y$ is called {\it Higson subpower}
 (resp. {\it Higson sublinear}, {\it Higson})
 provided that
\begin{align}
\tag*{$(\ast)_{s}^f$}
 \qquad \lim_{|x| \to \infty} \diam_{\rho} f(B_{d}(x, s(|x|)))=0
\end{align}
 for each $s\in \mathcal{P}$ (resp. $s\in \mathcal{L}$, $s\in \mathcal{H}$).
Hence, a map $f:X\to Y$ is Higson subpower
 if and only if, given asymptotically subpower function $s$
  and $\varepsilon>0$, there exists a compact subset $K\subset X$
  such that $\diam_{\rho} f(B_{d}(x, s(|x|))<\varepsilon$ whenever $x\in X\setminus K$. 

Let $C^{\ast}(X)$ be the set of all bounded real-valued continuous functions on $X$.
For a subset $F$ of $C^{\ast}(X)$,
 let $e_F :X\to \prod_{f\in F} I_f$,
  where $I_f = [\inf f, \sup f]$,
  $f\in F$,
 be the {\it evaluation map} of $F$ defined by $(e_F (x) )_f =f(x)$ for every $x\in X$.
It is known that
 if $F\subset C^{\ast}(X)$ separates points from closed sets,
 then $e_F$ is an embedding \cite[8.16]{Wi}.
Identifying $X$ with $e_F (X)$,
 the closure $\overline{e_F (X)}$ of $e_F (X)$ in
  $\prod_{f\in F} I_f$
 gives a compactification of $X$.

For a proper metric space $X$,
 we consider the following three subsets of $C^{\ast}(X)$:
\begin{align*}
C_{H}(X)&=\{ f\in C^{\ast}(X)\mid f \text{ satisfies } (\ast)_{s}^f \text{ for every } s\in \mathcal{H} \},\\
C_{P}(X)&=\{ f\in C^{\ast}(X)\mid
 f \text{ satisfies } (\ast)_{s}^f \text{ for every } s\in \mathcal{P} \},\\
C_{L}(X)&=\{ f\in C^{\ast}(X)\mid
 f \text{ satisfies } (\ast)_{s}^f \text{ for every } s\in \mathcal{L} \}.
\end{align*}
Then they are closed subrings of $C^{\ast}(X)$ with respect to the sup-metric.
Also, they contain all constant maps and separate points from closed sets.
Hence the subrings
 $C_{H}(X)$, $C_{P}(X)$ and $C_{L}(X)$
 determine compactifications of $X$.
Since $\mathcal{H}\subset \mathcal{P}\subset \mathcal{L}$,
 it follows that
 $C_L (X)\subset C_P(X)\subset C_H(X)$ 
 (cf. \cite{KZ1}).

Let $c_1 X$ and $c_2 X$ be compactifications of $X$.
We say $c_1 X\succeq c_2 X$ provided that
 there is a continuous map $f:c_1 X \to c_2 X$ such that $f|_{X}=\mbox{id}_X$. 
We note that a continuous map $f:c_1 X \to c_2 X$ with $f|_{X}=\mbox{id}_X$
 is unique and surjective.
If $c_1 X \preceq  c_2 X$ and $c_1 X \succeq c_2 X$,
 then we say that $c_1 X$ and $c_2 X$ are {\it equivalent}.

The {\it Higson compactification} $h_H (X)$,
 the {\it subpower Higson compactification} $h_P(X)$
 and 
 the {\it sublinear Higson compactification} $h_L(X)$
 are compactifications defined by closed subrings
 $C_{H}(X)$, $C_{P}(X)$ and $C_{L}(X)$ respectively,
 that is,
 $h_A (X)$ is 
 equivalent to $\overline{e_{C_A}(X)}$,
 where $A\in \{H, P, L\}$ and $C_A =C_A (X)$.
Their {\it coronas} are defined by
 $\nu_A X=h_A (X)\setminus X$ for each $A\in \{ H, P, L\}$.
These three compactifications are referred to as
 the {\it Higson type compactifications}.
We note that $h_L (X)\preceq h_P (X) \preceq h_H(X)$
 since $C_L (X)\subset C_P(X)\subset C_H(X)$.

One of the basic property of
 the Higson type compactifications is the following:
\begin{itemize}
\item[$(\natural)$]
A bounded continuous map $f: X\to \mathbb R$ has an extension
 $\hat{f}: h_A (X)\to \mathbb R$ if and only if $f \in C_{A}(X)$,
\end{itemize}
 where $A\in \{ H, P, L\}$.
This condition holds for any closed subring of $C^{\ast}(X)$ which
 contains all constant maps and separates points from closed sets \cite[Problem 3.12.22(e)]{Eng}.
See \cite{BY} for a comprehensive and detailed description of this property.

In this section,
 we shall derive some basic properties concerning the Higson subpower compactifications
 using the basic ideas of \cite{DKU}, \cite{DS} and \cite{Keesling}.

The following proposition is a fundamental property of the Higson type compactification
 which is derived from the property $(\natural)$
 and can be shown by a similar argument
 to that in \cite[Proposition 1]{Keesling}.

\begin{prop}[cf. \cite{Keesling}]
\label{P2-1}
Let $X$ be a proper metric space.
Then the subpower (resp. sublinear)
 Higson compactification is the unique compactification of $X$
 such that if $Y$ is any compact metric space and $f:X\to Y$ is continuous,
 then $f$ has a continuous extension $\hat{f} : h_{P}(X)\to Y$ 
 (resp. $\hat{f} : h_{L}(X)\to Y$)
 if and only if
 $f$ satisfies $(\ast )^{f}_{s}$ for any $s\in \mathcal{P}$
 (resp. for any $s\in \mathcal{L}$).
\end{prop}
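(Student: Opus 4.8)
The plan is to prove the ``if and only if'' for a fixed continuous $f:X\to Y$ into a compact metric space, and then the uniqueness, treating the subpower case $A=P$ (the sublinear case $A=L$ is verbatim the same with $\mathcal P$ replaced by $\mathcal L$). First I would reduce the statement about maps into a compact metric space $Y$ to the scalar case covered by $(\natural)$. Embed $Y$ into the Hilbert cube $Q=\prod_{n}[0,1]$ and write $f=(f_n)_n$ with each $f_n:X\to[0,1]$ bounded and continuous. A continuous extension $\hat f:h_P(X)\to Q$ exists if and only if each coordinate $f_n$ extends continuously to $h_P(X)\to[0,1]$, which by $(\natural)$ happens exactly when $f_n\in C_P(X)$, i.e.\ $f_n$ satisfies $(\ast)^{f_n}_s$ for every $s\in\mathcal P$. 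So it suffices to show: $f$ satisfies $(\ast)^f_s$ for every $s\in\mathcal P$ if and only if every coordinate $f_n$ does. The forward direction is immediate since $\diam f_n(A)\le \diam f(A)$ for the product metric's coordinate projections (up to the fixed weights $2^{-n}$). For the converse one uses that the product metric on $Q$ is dominated by a finite sum of coordinate terms plus a tail: given $\varepsilon>0$ pick $N$ with $\sum_{n>N}2^{-n}<\varepsilon/2$, choose for each $n\le N$ a compact $K_n\subset X$ outside which $\diam f_n(B_d(x,s(|x|)))<\varepsilon/2$, and set $K=\bigcup_{n\le N}K_n$; outside $K$ one gets $\diam f(B_d(x,s(|x|)))<\varepsilon$. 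Finally, a continuous extension $h_P(X)\to Q$ restricts to one $h_P(X)\to \overline{f(X)}\subset Y$ since $h_P(X)$ is compact and $f(X)\subset Y$, so extendability into $Q$ and into $Y$ coincide. This establishes the characterization.

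For uniqueness I would argue as follows. Suppose $cX$ is a compactification of $X$ with the stated extension property. First I claim $cX\succeq h_P(X)$: every $f\in C_P(X)$ is a continuous map $X\to [\inf f,\sup f]$ satisfying $(\ast)^f_s$ for all $s\in\mathcal P$, hence extends continuously over $cX$; therefore the evaluation map $e_{C_P(X)}$ extends continuously to $cX\to\prod_{f}I_f$, and its image is the compactification $h_P(X)$, giving a map $cX\to h_P(X)$ fixing $X$. Conversely $h_P(X)\succeq cX$: take any $g\in C^\ast(X)$ that extends continuously to $cX$; to apply Proposition's own criterion to $cX$ in the reverse direction I instead note that $cX\succeq h_P(X)$ already shows $cX$ is ``at least as large as'' $h_P(X)$, and then the extension property of $cX$ forces it to be ``no larger'': indeed, consider the quotient map $q:cX\to h_P(X)$; if it identified two points $p\ne p'$ of $cX\setminus X$, pick by normality a continuous $g:cX\to[0,1]$ with $g(p)=0$, $g(p')=1$; then $g|_X$ extends continuously over $cX$, hence (by hypothesis applied to $cX$) satisfies $(\ast)^{g|_X}_s$ for all $s\in\mathcal P$, hence $g|_X\in C_P(X)$, hence $g|_X$ extends continuously over $h_P(X)$ and thus factors through $q$ — contradicting $g(p)\ne g(p')$ since $q(p)=q(p')$. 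Therefore $q$ is injective, hence a homeomorphism, and $cX$ is equivalent to $h_P(X)$.

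I expect the main obstacle to be purely organizational rather than deep: the only genuinely substantive point is the passage from scalar extendability (which $(\natural)$ hands us) to extendability of maps into an arbitrary compact metric space, and this is the standard Hilbert-cube-embedding device together with the coordinatewise estimate on diameters under the product metric — the finite-plus-tail splitting in the converse direction being the one place a small uniformity argument is needed. The uniqueness half is then a formal argument with the universal property of $\overline{e_F(X)}$ and the point-separation / normality of compactifications, exactly parallel to \cite[Proposition 1]{Keesling}; no new idea is required, only care that every function invoked genuinely lies in $C_P(X)$, which is precisely where the hypothesis on $cX$ is used.
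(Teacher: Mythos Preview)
Your proposal is correct and follows precisely the approach the paper points to: the paper does not write out a proof but refers to property $(\natural)$ and \cite[Proposition~1]{Keesling}, and what you have supplied---the Hilbert-cube embedding to pass from scalar extendability to maps into arbitrary compact metric spaces, together with the standard separation/factoring argument for uniqueness---is exactly the Keesling argument adapted to $C_P(X)$. Nothing further is needed.
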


Let $(X,d)$ be a metric space.
We call a finite system $E_1, \dots , E_n$ of closed subsets of $X$ 
 {\it diverges as a power function}
if there exist $\alpha>0$ and $r_0 >0$ such that
 $\max\{ d(x, E_i) \mid 1\leq i \leq n\}\geq |x|^\alpha$ whenever $|x|>r_0$.
If a finite system $E_1, \dots , E_n$ diverge as a power function
 then $(\cap_{i=1}^n E_{i})\cap (X\setminus B_d(x_0, r))=\emptyset$
 for some $r>0$.
As in the case of Higson compactification \cite[Lemma 2.2]{DKU}, we have the following:

\begin{lem}[cf. \cite{DKU}]
\label{L2-2}
Let $(X,d)$ be a non-compact proper metric space.
Let $E_1, \dots, E_n$ be a finite system of closed subsets of $X$
 such that $\cap_{i=1}^n E_i =\emptyset$.
Let $f_i : X\to \mathbb R_+ $ be the map defined by $f_i(x)=d(x, E_i)$, $1\leq i\leq n$
 and let $F=\Sigma_{i=1}^{n} f_i$.
If the system $E_1, \dots, E_n$ diverges as a power function
 then the map $g_i =f_i/F : X\to [0,1]$ is Higson subpower.
\end{lem}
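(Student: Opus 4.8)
The plan is to bound the oscillation of $g_i$ on a ball $B_d(x,s(|x|))$ directly, by combining the Lipschitz property of the distance functions $f_i$ with the lower bound on $F$ supplied by the divergence hypothesis. First I would record the elementary facts: each $f_i=d(\cdot,E_i)$ is $1$-Lipschitz, so $F=\sum_{i=1}^n f_i$ is $n$-Lipschitz; and $F(x)>0$ for every $x\in X$, since $F(x)=0$ would force $x\in\overline{E_i}=E_i$ for all $i$, contradicting $\bigcap_{i=1}^n E_i=\emptyset$. Hence $g_i=f_i/F$ is well defined and continuous, and $0\le f_i\le F$ gives $g_i(X)\subset[0,1]$. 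The key algebraic identity is
\[
g_i(x)-g_i(y)=\frac{f_i(x)}{F(x)F(y)}\bigl(F(y)-F(x)\bigr)+\frac{f_i(x)-f_i(y)}{F(y)},
\]
from which, using $f_i(x)/F(x)\le 1$ together with $|F(x)-F(y)|\le n\,d(x,y)$ and $|f_i(x)-f_i(y)|\le d(x,y)$, I obtain
\[
|g_i(x)-g_i(y)|\ \le\ \frac{(n+1)\,d(x,y)}{F(y)}.
\]

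Next I would produce a uniform lower bound for $F$ on the ball $B_d(x,s(|x|))$. Fix $s\in\mathcal P$. Since $\mathcal P\subset\mathcal L$, there is $r_1>0$ with $s(t)<t/2$ for $t>r_1$; enlarging $r_1$ so that also $r_1>2r_0$, every $y\in B_d(x,s(|x|))$ with $|x|>r_1$ satisfies $|y|\ge |x|-d(x,y)\ge |x|-s(|x|)\ge |x|/2>r_0$, so the divergence hypothesis yields
\[
F(y)\ \ge\ \max_{1\le i\le n} d(y,E_i)\ \ge\ |y|^{\alpha}\ \ge\ (|x|/2)^{\alpha}.
\]
Combining the two displayed estimates, for any $y,z\in B_d(x,s(|x|))$ with $|x|>r_1$,
\[
|g_i(y)-g_i(z)|\le |g_i(y)-g_i(x)|+|g_i(x)-g_i(z)|\le \frac{2(n+1)\,s(|x|)}{(|x|/2)^{\alpha}}=2^{\alpha+1}(n+1)\,\frac{s(|x|)}{|x|^{\alpha}}.
\]
Now I would invoke the subpower property once more, with the exponent $\alpha/2$: there is $t_0$ with $s(t)<t^{\alpha/2}$ for $t>t_0$, hence $s(|x|)/|x|^{\alpha}<|x|^{-\alpha/2}\to 0$. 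Therefore $\diam g_i(B_d(x,s(|x|)))\to 0$ as $|x|\to\infty$; given $\varepsilon>0$ one picks $R$ so large that the last quantity is $<\varepsilon$ for $|x|>R$ and sets $K=B_d(x_0,R)$, which is compact by properness of $d$. As $s\in\mathcal P$ was arbitrary, $g_i$ satisfies $(\ast)^{g_i}_s$ for all $s\in\mathcal P$, i.e.\ $g_i$ is Higson subpower.

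I expect the only genuinely delicate point to be the second step, where one must use \emph{both} features of an asymptotically subpower $s$: that $s(|x|)$ is eventually small compared with $|x|$ (so that $|y|$ stays of order $|x|$ across the whole ball, making the divergence estimate $|y|^{\alpha}$ apply uniformly), and that $s(|x|)$ is eventually small compared with $|x|^{\alpha}$ (so that the final ratio tends to $0$). This is precisely where the argument departs from, while closely paralleling, the constant-function case of \cite[Lemma 2.2]{DKU}; everything else is routine bookkeeping with Lipschitz constants.
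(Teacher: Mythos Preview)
Your proof is correct and follows essentially the same strategy as the paper's: both combine the $1$-Lipschitz property of the $f_i$, an algebraic decomposition of $g_i(x)-g_i(y)$, and the lower bound on $F$ coming from the divergence hypothesis. The only cosmetic difference is that the paper groups the decomposition so that $F(x)$ (at the \emph{center} of the ball) sits in the denominator rather than $F(y)$, which lets it bypass your intermediate step of bounding $|y|$ from below via the sublinearity of $s$.
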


\begin{proof}
First we note that the well-definedness of $g_i$
 follows from the assumption $\cap_{i=1}^n E_i =\emptyset$.
Let $\alpha, r_1 >0$ be positive numbers such that
 $\max\{ d(x, E_i) \mid 1\leq i \leq n\}\geq |x|^{2 \alpha}$ whenever $|x|>r_1$.
Then $F(x)\geq |x|^{2 \alpha}$ whenever $|x|>r_1$.
Let $s: \mathbb R_+ \to \mathbb R_+$ be an asymptotically subpower function.
Then there exists $r_2>0$ such that $s(t)< t^{\alpha}$ for every $t>r_2$.
Given $\varepsilon >0$,
 we can take $r_3 >0$ so that $(n+1)/t^{\alpha}<\varepsilon/2$ for every $t>r_3$.

Put $r_0 =\max \{ r_1, r_2, r_3\}$.
Let $x$ be a point with $|x|>r_0$ and let $y \in B_d(x, s(|x|))$.
Then we have
\begin{align*}
|g_i (x)-g_i(y)|&=\left| \dfrac{f_i (x)}{F(x)} -\dfrac{f_i (y)}{F(y)}\right|
 =\left| \dfrac{f_i (x)-f_i (y)}{F(x)}
 + \dfrac{f_i(y)}{F(y)}\cdot \dfrac{F (y)-F(x)}{F(x)}\right|\\
 &\leq \left| \dfrac{d(x,y)}{F(x)} \right|
+ \left| \dfrac{F (y)-F(x)}{F(x)}\right|
 \leq \dfrac{(n+1)s(|x|)}{F(x)}\\
 &\leq \dfrac{(n+1)s(|x|)}{|x|^{2\alpha}}
 <\dfrac{(n+1) |x|^{\alpha}}{|x|^{2\alpha}}=\dfrac{(n+1)}{|x|^{\alpha}}<\dfrac{\varepsilon}{2}.
\end{align*}
Hence, $\diam g_i (B_d(x, s(|x|)))\leq \varepsilon$ whenever $|x|>r_0$.
Thus $g_i$ is Higson subpower for every $1\leq i\leq n$.
\end{proof}

For a subset $A$ of a proper metric space $X$,
 $\overline{A}$ denotes the closure of $A$
 in the subpower Higson compactification $h_P X$ of $X$.
As in case of the Higson corona \cite[Proposition 2.3]{DKU} and of
 the sublinear Higson corona \cite[Lemma 2.3]{DS},
 we have the following.

\begin{prop}[cf. \cite{DKU}, \cite{DS}]
\label{P2-3}
Let $(X,d)$ be a non-compact proper metric space.
For a system $E_1,\dots , E_n$ of closed subsets of $X$,
 the following conditions are equivalent:
\begin{enumerate}
\item $\nu_P X \cap \left( \cap_{k=1}^{n}\overline{E_k} \right) =\emptyset$,
\item the system $E_1, \dots, E_n$ diverges as a power function.
\end{enumerate}
\end{prop}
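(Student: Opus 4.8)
The plan is to follow the pattern used for the Higson corona in \cite[Proposition 2.3]{DKU} and for the sublinear Higson corona in \cite[Lemma 2.3]{DS}, with Lemma~\ref{L2-2} playing the role of their separation lemmas. For the implication $(2)\Rightarrow(1)$, I would first note that it suffices to treat the case $\bigcap_{i=1}^{n}E_i=\emptyset$: if $K:=\bigcap_i E_i\neq\emptyset$, then by the remark following the definition $K\subset B_d(x_0,r)$ for some $r>0$, so $K$ is compact, and replacing each $E_i$ by $E_i':=\overline{E_i\setminus\{x:d(x,K)<1\}}$ gives a system with $\bigcap_i E_i'=\emptyset$ that still diverges as a power function (the sets only shrink, so each $d(\,\cdot\,,E_i)$ only increases), while $E_i\setminus E_i'\subset B_d(x_0,r+1)$ is relatively compact, so $\overline{E_i}$ and $\overline{E_i'}$ have the same trace on $\nu_P X$. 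Assuming then $\bigcap_i E_i=\emptyset$, Lemma~\ref{L2-2} provides Higson subpower maps $g_i=f_i/F\colon X\to[0,1]$ with $f_i=d(\,\cdot\,,E_i)$ and $F=\sum_i f_i$; hence each $g_i\in C_P(X)$ and extends to $\hat g_i\colon h_P X\to[0,1]$. Since $g_i$ vanishes on $E_i$, $\hat g_i$ vanishes on $\overline{E_i}$, and since $\sum_i g_i\equiv 1$ on the dense subset $X$ we also have $\sum_i\hat g_i\equiv 1$ on $h_P X$; a point of $\bigcap_k\overline{E_k}$ would then satisfy $0=\sum_i\hat g_i=1$, so $\bigcap_k\overline{E_k}=\emptyset$, and in particular $\nu_P X\cap\bigcap_k\overline{E_k}=\emptyset$.

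For $(1)\Rightarrow(2)$ I would argue the contrapositive. If $E_1,\dots,E_n$ does not diverge as a power function, then negating the definition and diagonalizing over $\alpha=1/j$ produces points $x_j\in X$ with $|x_j|$ strictly increasing to $\infty$ and $\max_i d(x_j,E_i)<|x_j|^{1/j}$; choose $y_j^i\in E_i$ with $d(x_j,y_j^i)<|x_j|^{1/j}$. The key device is the function $s\colon\mathbb R_+\to\mathbb R_+$ defined by $s(t)=t^{1/j}$ on $[\,|x_j|,|x_{j+1}|\,)$: for any $\alpha>0$ and $t\ge|x_{j_0}|$ with $1/j_0<\alpha$ one has $s(t)\le t^{1/j_0}<t^{\alpha}$, so $s\in\mathcal P$, while $y_j^i\in B_d(x_j,s(|x_j|))$ for all $i,j$. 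Hence for every $g\in C_P(X)$ the condition $(\ast)^{g}_{s}$ — applied with the compact set it provides, which the $x_j$ eventually leave — forces $\lim_j|g(x_j)-g(y_j^i)|=0$ for each $i$. Now let $D_m=\{x_j:j\ge m\}$; the closures $\overline{D_m}$ are nested, nonempty and closed in the compact space $h_P X$, so we may fix $\xi\in\bigcap_m\overline{D_m}$, and since each $D_m$ is closed and discrete in $X$ we get $\bigcap_m(\overline{D_m}\cap X)=\bigcap_m D_m=\emptyset$, whence $\xi\in\nu_P X$. It remains to check $\xi\in\overline{E_i}$ for each $i$: otherwise normality of $h_P X$ and property $(\natural)$ give $g\in C_P(X)$ with $g|_{E_i}=0$ and $\hat g(\xi)=1$; but $g(y_j^i)=0$ together with $|g(x_j)-g(y_j^i)|\to 0$ gives $g(x_j)\to 0$, and with $\xi\in\overline{D_m}$ for all $m$ this forces $\hat g(\xi)=0$, a contradiction. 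Thus $\xi\in\nu_P X\cap\bigcap_i\overline{E_i}$, which is therefore nonempty.

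I expect the main obstacle to be the construction and exploitation of the single asymptotically subpower function $s$: it must dominate every gap value $|x_j|^{1/j}$ while remaining asymptotically subpower, and this is exactly the point where the class $\mathcal P$ — rather than $\mathcal H$ or $\mathcal L$ — enters the argument, so the piecewise-power interpolation is essential and cannot be replaced by a constant or a linear bound. By comparison, the reduction to empty intersection in $(2)\Rightarrow(1)$ and the verification that truncation by a compact set alters neither power-divergence nor the trace on $\nu_P X$ are routine bookkeeping.
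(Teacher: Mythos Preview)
Your proposal is correct and follows essentially the same route as the paper: both directions mirror \cite[Proposition~2.3]{DKU} and \cite[Lemma~2.3]{DS}, with Lemma~\ref{L2-2} and a piecewise-defined asymptotically subpower function doing the work. The only cosmetic differences are that the paper truncates by the ball $B_d(x_0,r+1)$ rather than by a neighborhood of $K=\bigcap_i E_i$, defines $s$ to be constant $|x_i|^{1/i}$ on each interval rather than $t^{1/j}$, and argues $p\in\overline{E_k}$ via the evaluation map rather than via normality and $(\natural)$; these are interchangeable.
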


\begin{proof}
Suppose that the condition (2) does not hold.
Then there exists a sequence $(x_i)_{i=1}^{\infty}$ in $X$ such that
$\lim_{i\to \infty} |x_i|=\infty$,
$|x_i|<|x_{i+1}|$
 and $d(x_i, E_k )<|x_i|^{1/i}$
 for every $i\in \mathbb N$ and $1\leq k \leq n$.
Let $p$ be a cluster point of the sequence $(x_i)_{i=1}^{\infty}$.
Then $p\in \nu_P X$.
Put $t_i =|x_i|$ for each $i\in \mathbb N$.
Let  $s:\mathbb R_+ \to \mathbb R_+$ be the (non-continuous) function
  defined by $s(t)= |x_i|^{1/i}$ when $t\in [t_i, t_{i+1})$ and $s(t)=1$ when $t\in [0, t_1)$.
It is easy to see that $s$ is an asymptotically subpower function
 and $d(x_i, E_k )<s(|x_i|)$ for every $i\in \mathbb N$ and $1\leq k \leq n$.
Hence, if $f$ is any element of $C_P(X)$,
 the condition $(\ast)_{s}^f$ implies that
 the distance between $f(x_i) $ and $f(E_k)$
 tends to zero as $i$ tends to infinity.
Considering the evaluation map $e_{C_P(X)}: X\to \prod_{f\in C_P(X)} I_f$,
 it follows from this fact that $p\in \overline{E_k}$
 for every $1\leq k\leq n$. 
Thus $p\in \nu_P X \cap \left( \cap_{k=1}^{n}\overline{E_k} \right) \neq \emptyset$.

Suppose that
 the system $E_1, \dots, E_n$ diverges as a power function.
Then we have
 $(\cap_{k=1}^n E_{k})\cap (X\setminus B_d(x_0, r))=\emptyset$
 for some $r>0$.
Put $F_k =\mbox{cl}_X (E_k \setminus B_d(x_0, r+1))$ for each $k =1,\dots, n$.
Then the system $F_1,\dots , F_n$
 diverges as a power function and $\cap_{k=1}^{n}F_k=\emptyset$.
Note that $\overline{F_k}\cap \nu_P X =\overline{E_k}\cap \nu_P X$ for every $1\leq k \leq n$.
By Lemma \ref{L2-2} there exist  Higson subpower maps $g_k : X\to \mathbb [0,1]$, $1\leq k\leq n$,
 such that $F_k \subset g_k^{-1}(0)$ and $\Sigma_{k=1}^{n} g_k =1$.
By Proposition \ref{P2-1},
 there exists an extension $G_k : h_P X \to [0,1]$ of $g_k$.
Then $\Sigma_{k=1}^{n} G_k =1$ and $\overline{F_k}\subset G_{k}^{-1}(0)$.
However,
 the condition $\Sigma_{k=1}^{n} G_k =1$ leads that
 $\nu_P X \cap \left( \cap_{k=1}^{n}\overline{E_k} \right) 
\subset \cap_{k=1}^n \overline{F_k}\subset \cap_{k=1}^n G_{k}^{-1}(0)=\emptyset$.
\end{proof}

We note that the implication $(1) \Rightarrow (2)$
 for the case $n=2$
 of the above proposition was in fact shown by Kucab and Zarichnyi in \cite{KZ3}.

\begin{prop}[cf. \cite{Keesling}]
\label{P2-4}
Let $X$ be a proper metric space and
 let $Y$ be a closed subset of $X$ with the induced metric from $X$.
Then the closure $\overline{Y}$ of $Y$ in $h_P (X)$ is
 equivalent to the subpower Higson compactification
 $h_P (Y)$ of $Y$.
\end{prop}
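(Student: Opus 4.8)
The plan is to show the two compactifications are equivalent by exhibiting continuous maps in both directions that restrict to the identity on $Y$. One direction is immediate: the restriction map $C^{\ast}(X)\to C^{\ast}(Y)$, $f\mapsto f|_Y$, sends $C_P(X)$ into $C_P(Y)$, because if $f$ satisfies $(\ast)^f_s$ for every $s\in\mathcal P$ as a map on $X$, and $Y$ carries the induced metric with the same base point (or one at bounded distance, which changes nothing asymptotically), then $f|_Y$ satisfies the corresponding condition on $Y$ — balls in $Y$ are contained in balls in $X$ of the same radius. Hence $\overline{e_{C_P(X)}(Y)}$ dominates $h_P(Y)$, i.e. there is a continuous surjection $\overline{Y}\to h_P(Y)$ restricting to $\mathrm{id}_Y$. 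So it suffices to produce a continuous map the other way, $h_P(Y)\to\overline{Y}$, fixing $Y$; equivalently, by property $(\natural)$ and Proposition~\ref{P2-1}, it suffices to show that every $g\in C_P(Y)$ extends to a bounded continuous function on $X$ that lies in $C_P(X)$ — or at least extends to $\overline{Y}\subset h_P(X)$.

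The key step, therefore, is an extension lemma: given $g\in C_P(Y)$ with $Y\subset X$ closed and $X$ proper, construct $\tilde g\in C_P(X)$ with $\tilde g|_Y=g$. First I would reduce to $g:Y\to[0,1]$ by composing with a homeomorphism $\mathbb R\to(0,1)$, so boundedness is automatic. Since $Y$ is closed in the metric space $X$, $g$ has a continuous extension $\tilde g:X\to[0,1]$ by the Tietze extension theorem. The point is to choose this extension to remain Higson subpower. I would use a controlled partition-of-unity construction: cover $X\setminus Y$ by balls $B_d(x,\tfrac13 d(x,Y))$, pass to a locally finite refinement $\{U_j\}$ with a subordinate partition of unity $\{\varphi_j\}$, pick $y_j\in Y$ with $d(x_j,y_j)\le 2\,d(x_j,Y)$ for the center $x_j$ of $U_j$, and set $\tilde g(x)=\sum_j\varphi_j(x)\,g(y_j)$ on $X\setminus Y$, $\tilde g|_Y=g$. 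Continuity across $\partial Y$ follows from uniform continuity of $g$ near a boundary point together with the diameter bound $\diam U_j\lesssim d(U_j,Y)$.

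The main obstacle is verifying $(\ast)^{\tilde g}_s$ for every $s\in\mathcal P$. Fix $s\in\mathcal P$ and $\varepsilon>0$; I must find a compact $K$ with $\diam\tilde g(B_d(x,s(|x|)))<\varepsilon$ for $x\notin K$. The estimate splits according to how far $x$ is from $Y$ relative to $s(|x|)$. If the ball $B_d(x,s(|x|))$ stays within bounded $Y$-distance of a region where the relevant $y_j$'s all lie in a ball of $Y$ of radius comparable to $s(|x|)$, then the values $g(y_j)$ appearing vary by less than $\varepsilon$ because $g\in C_P(Y)$ applied to the asymptotically subpower function $t\mapsto C\,s(t)$ (note $\mathcal P$ is closed under multiplication by constants, and under replacing $t$ by a comparable quantity such as $|y_j|$, since $|y_j|$ and $|x|$ differ by at most $O(d(x,Y)+s(|x|))$); here one also uses that on $X\setminus Y$ the mesh of the cover is controlled by $d(\cdot,Y)$. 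If instead $B_d(x,s(|x|))$ meets $Y$, one compares values of $g$ at genuine points of $Y$ within distance $O(s(|x|))$, again controlled by $g\in C_P(Y)$. The bookkeeping — relating $|x|$, $|y_j|$, $d(x,Y)$, and $s(|x|)$, and checking that all the modified gauge functions produced stay in $\mathcal P$ — is the technical heart; it mirrors, for the subpower class, the arguments used for the Higson corona in \cite[Proposition 2.3]{DKU} and the sublinear corona in \cite{DS}, with $\mathcal H$ (resp. $\mathcal L$) replaced by $\mathcal P$ throughout. Once the extension lemma is in hand, $\tilde g$ extends to $h_P(X)$, its restriction to $\overline Y$ depends only on $g|_Y=g$, and collecting these over all $g\in C_P(Y)$ gives the continuous map $\overline Y\to h_P(Y)$ fixing $Y$, completing the proof that $\overline Y$ and $h_P(Y)$ are equivalent.
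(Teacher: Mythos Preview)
Your approach is genuinely different from the paper's, and the sketch has a real gap. The paper does not try to extend individual functions from $C_P(Y)$ to $C_P(X)$ at all. Instead it invokes Engelking's criterion \cite[Theorem 3.5.5]{Eng}: two compactifications of $Y$ are equivalent iff they separate the same pairs of closed subsets of $Y$. By Proposition~\ref{P2-3}, for closed $A,B\subset Y$ one has $\mathrm{cl}_{h_P(X)}A\cap\mathrm{cl}_{h_P(X)}B=\emptyset$ iff $A\cap B=\emptyset$ and the system $A,B$ diverges as a power function in $X$; since $Y$ carries the induced metric, this is exactly the same condition as divergence as a power function in $Y$, which (again by Proposition~\ref{P2-3}, applied in $Y$) is equivalent to $\mathrm{cl}_{h_P(Y)}A\cap\mathrm{cl}_{h_P(Y)}B=\emptyset$. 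Three lines, no extensions.

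The gap in your argument is the regime where $d(x,Y)$ is large compared to $s(|x|)$, which your case split does not cover. You analyze (i) the ball $B_d(x,s(|x|))$ meeting $Y$, and (ii) a vaguely described ``bounded $Y$-distance'' situation in which the relevant nearest points $y_j$ lie in a $Y$-ball of radius $\sim s(|x|)$. But when $d(x,Y)$ is, say, a fixed positive power of $|x|$, the partition-of-unity patches meeting $B_d(x,s(|x|))$ have diameter $\sim d(x,Y)$, and the attached points $y_j\in Y$ are spread over a $Y$-ball of radius $\sim d(x,Y)$, \emph{not} $\sim s(|x|)$. The Higson-subpower hypothesis on $g$ then gives no control over the spread of the values $g(y_j)$. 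What actually saves the construction in this regime is a different mechanism you do not mention: the coefficients $\varphi_j$ themselves vary by only $O\!\bigl(s(|x|)/d(x,Y)\bigr)$ across the ball, so $\tilde g$ is nearly constant regardless of how scattered the $g(y_j)$ are. Making this precise requires controlling the Lipschitz constants of the $\varphi_j$ and the local multiplicity of the cover, neither of which is automatic for an arbitrary locally finite refinement in a general proper metric space. So the extension lemma can be rescued, but not by the bookkeeping you outline; and once one sees Proposition~\ref{P2-3} is available, the paper's route is both shorter and avoids all of these issues.
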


\begin{proof}
By \cite[Theorem 3.5.5]{Eng},
 it suffices to show $\mbox{cl}_{h_P (X)} A \cap \mbox{cl}_{h_P (X)} B =\emptyset$
 if and only if $\mbox{cl}_{h_P (Y)} A \cap \mbox{cl}_{h_P (Y)} B =\emptyset$
 for every pair $A$, $B$ of closed subsets of $Y$.
It follows from Proposition \ref{P2-3} that
 the condition
 $\mbox{cl}_{h_P (X)} A \cap \mbox{cl}_{h_P (X)} B =\emptyset$
 if and only if the system $A,B$ diverges as a power function in $X$ and $A\cap B=\emptyset$.
This is equivalent to the condition that
 the system $A,B$ diverges as a power function in $Y$ and $A\cap B=\emptyset$
 since the metric on $Y$ is inherited from $X$.
\end{proof}

Let $(X, d)$ be a metric space.
For any $R>0$, a subset $Y$ of $X$ is called {\it $R$-dense} in $X$
 if $B_d (x ,R)\cap Y\neq \emptyset$ for every $x\in X$.

\begin{cor}[cf. \cite{Keesling}]
\label{C2-5}
Let $(X, d)$ be a proper metric space
 and let $Y$ be a subset of $X$.
If $Y$ is $R$-dense in $X$ for some $R>0$
 then $\overline{Y}\setminus Y =\nu_P X$. 
In particular, $\nu_P Y$ is homeomorphic to $\nu_P X$. 
\end{cor}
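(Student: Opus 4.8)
The plan is to reduce everything to the single inclusion $\nu_P X\subseteq\overline{Y}$, where $\overline{Y}$ denotes closure in $h_P(X)$ as in the rest of the section, with $Y$ closed in $X$ (so that Proposition~\ref{P2-4} applies and $\mathrm{cl}_X Y=Y$). Granting that inclusion, the general subspace identity $\mathrm{cl}_{h_P(X)}(Y)\cap X=\mathrm{cl}_X(Y)=Y$ gives $\overline{Y}\setminus Y=\overline{Y}\setminus X=\overline{Y}\cap\nu_P X=\nu_P X$; and by Proposition~\ref{P2-4} there is a homeomorphism $h_P(Y)\to\overline{Y}$ that is the identity on $Y$, so it carries $\nu_P Y=h_P(Y)\setminus Y$ homeomorphically onto $\overline{Y}\setminus Y=\nu_P X$, which is the final assertion.

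So the heart of the matter is to prove $\nu_P X\subseteq\overline{Y}$. I would argue by contradiction: let $p\in\nu_P X$ and suppose $p\notin\overline{Y}$. Since $h_P(X)$ is compact Hausdorff, Urysohn's lemma yields a continuous $\phi\colon h_P(X)\to[0,1]$ with $\phi(p)=1$ and $\phi\equiv 0$ on $\overline{Y}$. Put $f=\phi|_X$; then $f\in C^{\ast}(X)$ admits the continuous extension $\phi$ over $h_P(X)$, so by property $(\natural)$ we have $f\in C_P(X)$, with $f|_Y=0$ and $\hat f(p)=1$.

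Now I would apply the defining condition of $C_P(X)$ to the constant function $s\equiv R$, which lies in $\mathcal H\subseteq\mathcal P$: condition $(\ast)^f_s$ says $\lim_{|x|\to\infty}\diam f(B_d(x,R))=0$. Fix $\varepsilon>0$ and choose a compact $K\subseteq X$ with $\diam f(B_d(x,R))<\varepsilon$ for all $x\in X\setminus K$. For such an $x$, the $R$-density of $Y$ supplies some $y\in Y\cap B_d(x,R)$; since also $x\in B_d(x,R)$ and $f(y)=0$, we get $|f(x)|=|f(x)-f(y)|\le\diam f(B_d(x,R))<\varepsilon$. Hence $f(X\setminus K)\subseteq(-\varepsilon,\varepsilon)$, so $|\hat f|\le\varepsilon$ on $\mathrm{cl}_{h_P(X)}(X\setminus K)$. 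Because $K$ is compact, hence closed in $h_P(X)$, and $X$ is dense, $h_P(X)=K\cup\mathrm{cl}_{h_P(X)}(X\setminus K)$, so $\nu_P X\subseteq\mathrm{cl}_{h_P(X)}(X\setminus K)$ and in particular $|\hat f(p)|\le\varepsilon$. As $\varepsilon>0$ was arbitrary, $\hat f(p)=0$, contradicting $\hat f(p)=1$. Therefore $p\in\overline{Y}$, which proves $\nu_P X\subseteq\overline{Y}$ and completes the argument.

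The step I expect to be the main obstacle is precisely this last one: recognizing that the constant $R$ is a legitimate choice of $s\in\mathcal P$ so that the Higson subpower condition is available, and then packaging "$|f|<\varepsilon$ off a compact set" into "$\hat f$ vanishes on all of $\nu_P X$" via the covering $h_P(X)=K\cup\mathrm{cl}_{h_P(X)}(X\setminus K)$. Everything else — the use of Proposition~\ref{P2-4}, property $(\natural)$, and the subspace-closure identity — is routine bookkeeping.
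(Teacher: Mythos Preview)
Your proof is correct, but it proceeds along a different line from the paper's. The paper argues at the level of neighborhoods and invokes Proposition~\ref{P2-3}: given $x\in\nu_P X$ and nested neighborhoods $V\subset\overline V\subset U$, the sets $E=X\setminus(U\cap X)$ and $F=\overline V\cap X$ have disjoint traces on the corona, so by Proposition~\ref{P2-3} they diverge as a power function; this forces any sufficiently far-out point $z\in F$ to satisfy $B_d(z,R)\subset U$, and $R$-density then puts a point of $Y$ inside $U$. You instead work functionally: separate $p$ from $\overline Y$ by a Urysohn function, pull it back to $f\in C_P(X)$ via $(\natural)$, and use only the \emph{Higson} condition (constant $s\equiv R\in\mathcal H\subset\mathcal P$) together with $R$-density to see that $f$ vanishes at infinity, hence $\hat f$ vanishes on $\nu_P X$. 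Your route is a bit more self-contained in that it bypasses Proposition~\ref{P2-3} and Lemma~\ref{L2-2} entirely; the paper's route, on the other hand, makes visible the geometric mechanism (power-function divergence of complementary pieces) that drives everything in this section. Both reductions to Proposition~\ref{P2-4} for the final homeomorphism $\nu_P Y\cong\nu_P X$ are the same, and your remark that one tacitly needs $Y$ closed in $X$ (so that Proposition~\ref{P2-4} applies and $\overline Y\cap X=Y$) is well taken.
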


\begin{proof}
Let $x\in \nu_P X$.
Let $U$ and $V$ be open neighborhoods of $x$ in $h_P (X)$
 such that $x\in V\subset \overline{V}\subset U$.
Put $E=X\cap (h_P (X)\setminus U)$ and $F=X\cap \overline{V}$.
By Proposition \ref{P2-3},
 the system $E, F$ diverges as a power function
 since $\nu_P X \cap \overline{E}\cap \overline{F}=\emptyset$.
Thus, there are $\alpha, r >0$ such that
 $d(y, E)>|y|^{\alpha}$ whenever $y\in F$ and $|y|>r$.
Since $V\cap \nu_P X\neq \emptyset$ and $X$ is a proper metric space,
 we can take $z\in F$ so that $|z|>r$.
We may assume that $R<r^{\alpha}$.
Then $B_d(z, R )\cap E=\emptyset $, i.e.,
 $B_d(z, R)\subset U$.
Since $Y$ is $R$-dense in $X$,
 $\emptyset \neq Y\cap B_d(z, R) \subset Y\cap U$.
This fact leads that $x\in \overline{Y}$ since we can take $U$ as an arbitrarily small neighborhood of $x$.
Hence $\nu_P X$ is contained in $\overline{Y}\setminus Y$,
 i.e., $\overline{Y}\setminus Y = \nu_P X$.
By Proposition \ref{P2-4}, $\overline{Y}\setminus Y$ is homeomorphic to $\nu_P Y$.
Thus $\nu_P Y$ is homeomorphic to $\nu_P X$.
\end{proof}

%%%%%%%%%%%%%%%%%%%%%%%%%%%%%%%%%%%%%%%%%%%%%%%%%%%%%%%%%%%%%%%%%%%%%%%%%%%%

\section{Continua as subpower Higson coronas of $[0, \infty)$}
Throughout this section,
 $0$ is assumed to be the base point of the half-open interval $[0,\infty)$,
 that is, $x_0 =0\in [0, \infty)$.
A metric $d$ on $[0,\infty)$ defined by $d(x,y)=\sqrt{(x-y)^2}$
 is called {\it the usual metric} on $[ 0,\infty)$.
For notational simplicity,
 we identify $t$ with $|t|=d(x_0, t)$ for every $t\in [0, \infty)$
 when we consider the usual metric on $[0,\infty)$.
A {\it continuum} is a nonempty, compact and connected Hausdorff space.
A {\it subcontinuum} is a continuum which is a subset of a continuum.
A continuum is called {\it decomposable}
 if it can be represented as the union of two of its proper subcontinua.
A continuum which is not decomposable is said to be {\it indecomposable}.

Let $X=[0,\infty)$ be the half open interval 
 with the usual metric.
The indecomposability of the Stone-\v{C}ech remainder $\beta X\setminus X$
 and the Higson corona $\nu_H X$
 was proved in \cite{Bellamy} and in \cite{IT} respectively.
We show here that
 the subpower Higson corona $\nu_P X$ is also an indecomposable continuum.
Let $K$ be a proper closed subset of $\beta X \setminus X$
 with non-empty interior in $\beta X\setminus X$.
To see the indecomposability of $\beta X \setminus X$,
 Bellamy \cite{Bellamy} constructed a continuous map
 $f:X\to [0,1]$
 such that $f^{\ast}(K)=\{ 0, 1\}$,
 where $f^{\ast}:\beta X\to [0,1]$ is the extension of $f$.
Hence $K$ is disconnected.
We note that such the extension $f^{\ast}$
 to the Stone-\v{C}ech compactification always exists
 since $f$ is a bounded continuous map.
And then he constructed a continuous surjection
 from a given non-degenerate subcontinuum of $\beta X\setminus X$
 onto a given metric continuum.
Our strategy is essentially the same as the Bellamy's.
To see the indecomposability of $\nu_P X$,
 we will construct a map $\psi : X\to [0,1]$
 such that $\psi^{\ast} (K')=\{ 0, 1\}$ for a given proper closed subset $K'$ of $\nu_P X$
 with non-empty interior in $\nu_P X$,
 where $\psi^{\ast}:h_P X\to [0,1]$ is the extension of $\psi$.
In order to ensure that a map  $\psi:X\to [0,1]$
 has the extension $\psi^{\ast}:h_P X\to [0,1]$,
 we will carefully construct $\psi$ to be Higson subpower (cf. Proposition \ref{P2-1}).
Then we will construct a continuous surjection from $\nu_P X$
 onto the Higson type compactification $h_A (X)$
 for each $A\in \{ H, P, L\}$.

The following lemma plays an essential role to analyze the subpower Higson corona
 of the half-open interval $[0, \infty)$.

\begin{lem}\label{L3-1}
Let $X=[0,\infty)$ be the half open interval with the usual metric $d$.
Let $U$ and $V$ be non-empty disjoint open subsets of $h_P (X)$ such that
 $U\cap \nu_P X\neq \emptyset$ and $V\cap \nu_P X\neq \emptyset$.
Then there exist a natural number $k\geq 3$ and sequences $(a_n)_{n=1}^{\infty}$ and $(b_n)_{n=1}^{\infty}$
 with
$
0=b_0<a_1 <b_1 < \cdots <b_{n-1}<a_n <b_n <\cdots
$
 satisfying the following conditions:
\begin{enumerate}
\item
$(a_n)^{1/k}>2^{n}$,\label{s-1}
\item
$b_{n-1}+(b_{n-1})^{1/k}<a_n$,\label{s-2}
\item
$[b_{n-1}, a_n]\cap U \neq \emptyset$,\label{s-3}
\item
$b_n =a_n +(a_n)^{2/k}$,\label{s-4}
\item
$[a_n, b_n]\subset V\cap X$,\label{s-5}
\item
$a_n -b_{n-1}>2^{n-1}(a_n)^{1/k}$,\label{s-6}
\item
$b_n -a_n > 2^{n}(b_n)^{1/k}$.\label{s-7}
\end{enumerate}
\end{lem}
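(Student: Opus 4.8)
The plan is to build the two sequences inductively, alternating between "catching a point of $U$" and "catching an interval inside $V$", while keeping the gaps large enough (in the scale dictated by the exponent $1/k$) that all seven inequalities can be arranged simultaneously. First I would fix the exponent $k$. Since $U\cap\nu_P X\neq\emptyset$ and $V\cap\nu_P X\neq\emptyset$ are disjoint open sets of $h_P X$, I would choose disjoint open $U'\subset\overline{U'}\subset U$ and $V'\subset\overline{V'}\subset V$ still meeting $\nu_P X$, and apply Proposition \ref{P2-3} (in the case $n=2$) to the closed sets $E=X\cap(h_P X\setminus U)$ and $F=\mathrm{cl}_X(V'\cap X)$: these diverge as a power function, so there are $\alpha>0$, $r_0>0$ with $d(y,E)>|y|^{\alpha}$ for $y\in F$, $|y|>r_0$. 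Pick $k\geq 3$ an integer with $3/k<\alpha$ (so that, on $F$, a ball of radius $\approx|y|^{2/k}$ around a far point stays inside $U$'s complement of $E$, i.e.\ inside $U$, and in particular inside $V$ if we start from a point of $V'\cap X$). The role of $k\geq 3$ and of the exponents $1/k$ versus $2/k$ is exactly to leave this room.

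Next comes the induction. Suppose $b_{n-1}$ has been chosen (with $b_0=0$). Because $V'\cap\nu_P X\neq\emptyset$ and $X$ is proper, there are points of $V'\cap X$ of arbitrarily large norm; I would pick $a_n\in V'\cap X$ large enough to force (\ref{s-1}) $(a_n)^{1/k}>2^n$, (\ref{s-2}) $b_{n-1}+(b_{n-1})^{1/k}<a_n$, (\ref{s-6}) $a_n-b_{n-1}>2^{n-1}(a_n)^{1/k}$, and also $a_n>r_0$. All of these are satisfied once $a_n$ exceeds an explicit threshold depending only on $b_{n-1}$ and $n$ (for (\ref{s-6}), note $a_n-b_{n-1}\sim a_n$ dominates $2^{n-1}(a_n)^{1/k}$ for large $a_n$ since $1/k<1$), so such a choice exists. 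Then I would \emph{define} $b_n=a_n+(a_n)^{2/k}$, which is (\ref{s-4}). Condition (\ref{s-5}), $[a_n,b_n]\subset V\cap X$, follows from the divergence estimate: for every $y\in[a_n,b_n]\subset F$ we have $d(y,E)>|y|^{\alpha}\geq(a_n)^{\alpha}$, while $d(y,a_n)\leq(a_n)^{2/k}<(a_n)^{\alpha}$ for $a_n$ large (here $2/k<3/k<\alpha$), so $y\notin E$, i.e.\ $y\in U^c$; combined with $y$ lying in (a small neighbourhood of) $V'\cap X\subset V$ one gets $[a_n,b_n]\subset V\cap X$ — this is the step I would write out most carefully, possibly shrinking $V'$ or enlarging the threshold. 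For (\ref{s-3}), $[b_{n-1},a_n]\cap U\neq\emptyset$: this is immediate because $a_n\in V'\cap X$... wait, $a_n\in V$, not $U$; instead I would note that along the way from $b_{n-1}$ (near which, by the previous step with $U$ and $V$ swapped, there is a point of $U$) to $a_n$ the point guaranteeing $U$ lies in $[b_{n-1},a_n]$ — more cleanly, I would at the same time choose a point $u_n\in U\cap X$ with $b_{n-1}<u_n<a_n$, which is possible since $U\cap\nu_P X\neq\emptyset$ forces points of $U\cap X$ cofinally, and then just pick $a_n$ bigger than some such $u_n$. Finally (\ref{s-7}), $b_n-a_n>2^n(b_n)^{1/k}$: here $b_n-a_n=(a_n)^{2/k}$ and $b_n\sim a_n$, so the inequality reads roughly $(a_n)^{2/k}>2^n(a_n)^{1/k}$, i.e.\ $(a_n)^{1/k}>2^n$, which is exactly (\ref{s-1}) up to the harmless replacement of $a_n$ by $b_n=a_n+(a_n)^{2/k}<2a_n$; so strengthening the threshold in (\ref{s-1}) slightly (e.g.\ $(a_n)^{1/k}>2^{n+1}$) secures (\ref{s-7}) as well.

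The main obstacle is the simultaneous bookkeeping: every condition is a lower bound on how large $a_n$ must be chosen relative to $b_{n-1}$, $n$, $r_0$, $\alpha$, and $k$, and one must check that these finitely many thresholds are compatible — which they are, precisely because the "catching interval" has length $(a_n)^{2/k}$ with $2/k$ strictly between $1/k$ and $\alpha$, giving room both to absorb the $2^n(a_n)^{1/k}$-type gap requirements and to stay clear of $E$. Once $k$ and $\alpha$ are pinned down at the start via Proposition \ref{P2-3}, each inductive step is a matter of taking $a_n$ large enough, so the induction goes through and produces the desired $k$ and sequences $(a_n)$, $(b_n)$.
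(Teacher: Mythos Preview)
Your overall plan matches the paper's: use Proposition~\ref{P2-3} to extract a power-type separation, pick $k$ accordingly, then inductively choose $a_n$ cofinally inside (a shrunken) $V$ large enough to satisfy all the threshold inequalities, defining $b_n$ by~(4). However, your application of Proposition~\ref{P2-3} is set up incorrectly. You take $E=X\cap(h_P X\setminus U)$ and $F=\mathrm{cl}_X(V'\cap X)$; but since $U$ and $V$ are disjoint and $V'\subset V$, we have $V'\cap X\subset X\setminus U$, hence $F\subset E$. Thus $d(y,E)=0$ for every $y\in F$, and the system $E,F$ does \emph{not} diverge as a power function --- the hypothesis $\nu_P X\cap\overline{E}\cap\overline{F}=\emptyset$ of Proposition~\ref{P2-3} fails. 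Your subsequent reasoning reflects this confusion: if $E=X\setminus U$ then ``$y\notin E$'' means $y\in U$, not $y\in U^c$, and that immediately contradicts $y\in V$.

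The fix is to swap the roles: take $E=X\setminus(V\cap X)$ and apply Proposition~\ref{P2-3} to the pair $X\setminus(V\cap X)$, $\overline{W}\cap X$, where $W$ is open with $z\in W\subset\overline{W}\subset V$ for some $z\in V\cap\nu_P X$ (this is exactly the paper's choice). The divergence then reads $d(t,X\setminus(V\cap X))\geq t^{\alpha}$ for $t\in\overline{W}\cap X$ with $t\geq r_0$, so choosing $k\geq 3$ with $2/k<\alpha$ gives $B_d(t,t^{2/k})\subset V\cap X$ for such $t$. Now apply this at the single point $t=a_n\in W\cap X$ --- not at every $y\in[a_n,b_n]$, which would be circular since you have not yet shown those $y$ lie in $F$ --- to get $[a_n,b_n]\subset B_d(a_n,(a_n)^{2/k})\subset V\cap X$, using that $d$ is geodesic. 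With this correction in place, the rest of your inductive bookkeeping (choosing $u_n\in U\cap X$ cofinally for~(3), and reducing~(6) and~(7) to explicit lower bounds on $a_n$) is sound and is essentially the paper's argument.
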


\begin{proof}
Let $z$ be a point in $V\cap \nu_P X$
 and let $W$ be an open neighborhood of $z$ in $h_P X$ such that
 $z\in W \subset \overline{W}\subset V$.
Then 
\[
\nu_P X \cap \overline{(X \setminus (V\cap X))}\cap \overline{(\overline{W}\cap X)}
\subset \nu_P X \cap (h_P (X)\setminus V) \cap \overline{W}
=\emptyset.
\]
By Proposition \ref{P2-3},
 the system $X \setminus (V\cap X)$, $\overline{W}\cap X$
 diverges as a power function.
Hence,
 there exist $\alpha>0$ and $r_0>0$ such that
\[
 d(t,X \setminus (V\cap X))\geq t^\alpha
\text{ whenever } t\in \overline{W}\cap X \text{ and } t \geq r_0.
\]
Take a natural number $k\geq 3$ so that $\alpha >2/k$.
Then the condition above implies that
\[
\tag*{(A)}\label{A}
 B_d (t, t^{2/k})\subset V\cap X
\text{ whenever } t\in \overline{W}\cap X \text{ and } t\geq r_0.
\]
Then we consider the following two inequalities:
\begin{align}\tag*{(B)}\label{B}
\begin{cases}
x^k >x,\\
x^2 >2^{k}(x+x^{2/k}).
\end{cases}
\end{align}
Since $k\geq 3$, there is $t_1 >2$ such that
 $x$ satisfies \ref{B} whenever $x>t_1$.
Then we take $a_1 \in W\cap X$ so that $a_1>\max\{ r_0, (t_1)^k \}$.
This is possible
 since the set $W\cap X$ is cofinal in $X$
 by the condition $W\cap \nu_P X\neq \emptyset$.
Hence, $a_1$ satisfies 
 the conditions $(\ref{s-1})$ and $(\ref{s-2})$.
Also, the condition $U\cap \nu_P X\neq \emptyset$ allows us
 to make $a_1$ satisfy the condition $(\ref{s-3})$.
Then we define $b_1$ by $(\ref{s-4})$.
Since $a_1 \in \overline{W}\cap X$ and $d(a_1, b_1)=(a_1)^{2/k}$,
 we have $b_1 \in B_d (a_1, (a_1)^{2/k})\subset V\cap X$ by \ref{A}.
Then the condition $(\ref{s-5})$ follows from the fact that
 the metric $d$ is geodesic.
To see that $a_1$ and $b_1$ satisfy the conditions $(\ref{s-6})$ and $(\ref{s-7})$,
 put $a_1 =x$.
Then $b_1 =x +x^{2/k}$ by $(\ref{s-4})$.
Since $a_1 >t_1$, we have $(a_1 -b_0)^k =x^k >x =((a_1)^{1/k})^k$
 and $(b_1 -a_1)^k =x^2 >2^{k}(x+x^{2/k}) =(2 (b_1)^{1/k})^k$ by \ref{B}.
Thus the conditions $(6)$ and $(7)$ are satisfied.

Suppose that $b_{i-1}<a_i <b_i$ have been constructed for $i< n$.
Then we consider the following two inequalities:
\begin{align*}\tag*{(C)}\label{C}
\begin{cases}
(x-b_{n-1})^k >2^{(n-1)k}x,\\
x^2 >2^{nk}(x+x^{2/k}).
\end{cases}
\end{align*}
Since $k\geq 3$, there exists $t_n >2^n$ such that
 $x$ satisfies \ref{C} whenever $x>t_n$.
As in the first step,
 we can take $a_n \in W\cap X$ with
 $a_n >\max\{ r_0, (t_n)^k \}$
 so that
 the conditions $(\ref{s-1})$--$(\ref{s-3})$ are satisfied.
Then we define $b_n$ by (\ref{s-4}).
The condition (\ref{s-5}) follows from \ref{A}
 and the fact that the metric $d$ is geodesic.
Finally, to see the conditions (\ref{s-6}) and (\ref{s-7}),
 we put $a_n =x$.
Then $b_n =x+x^{2/k}$ by (\ref{s-4}).
Since $a_n >t_n$,
 it follows from \ref{C} that
 $(a_n -b_{n-1})^k =(x-b_{n-1})^k >2^{(n-1)k}x =(2^{n-1}(a_n )^{1/k})^k$
 and
 $(b_n -a_n)^k =x^2 >  2^{nk}(x+x^{2/k}) =( 2^{n}(b_n)^{1/k})^k$.
The conditions (\ref{s-6}) and (\ref{s-7}) are satisfied.
\end{proof}

\begin{prop}\label{P3-2}
Let $X=[0,\infty)$ be the half open interval with the usual metric $d$.
If $K$ is a proper closed subset of $\nu_P X$ with non-empty interior in $\nu_P X$
 then $K$ is disconnected.
\end{prop}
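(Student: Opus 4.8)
The plan is to imitate Bellamy's argument as sketched above: I construct a continuous map $\psi\colon X\to[0,1]$ that is Higson subpower, so that by Proposition~\ref{P2-1} it has a continuous extension $\psi^{\ast}\colon h_P(X)\to[0,1]$, and I arrange that $\psi^{\ast}(K)=\{0,1\}$. Granting this, $\psi^{\ast}|_{K}\colon K\to\{0,1\}$ is a continuous surjection onto a two-point discrete space, so $K$ is the union of the two disjoint, non-empty, relatively open sets $(\psi^{\ast}|_{K})^{-1}(0)$ and $(\psi^{\ast}|_{K})^{-1}(1)$; hence $K$ is disconnected.

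To prepare the data I use that $h_P(X)$ and $\nu_P X$ are compact Hausdorff, hence normal. Since $K$ is proper with non-empty interior in $\nu_P X$, a routine use of normality produces disjoint open sets $U,V\subseteq h_P(X)$ with $U\cap\nu_P X\neq\emptyset$, $V\cap\nu_P X\neq\emptyset$ and, crucially, $\overline{U}\cap\nu_P X\subseteq K$ while $\overline{V}\cap K=\emptyset$ (first choose $U$ with $U\cap\nu_P X\neq\emptyset$ and $\overline{U}\cap\nu_P X\subseteq\operatorname{int}_{\nu_P X}K$, and $V$ a neighbourhood of a point of $\nu_P X\setminus K$ with $\overline{V}\cap K=\emptyset$; then replace $V$ by $V\setminus\overline{U}$). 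Applying Lemma~\ref{L3-1} to this pair yields $k\geq3$ and sequences $(a_n),(b_n)$ with $0=b_0<a_1<b_1<\cdots$ satisfying $(\ref{s-1})$--$(\ref{s-7})$; in particular, each \emph{plateau} $[b_{n-1},a_n]$ contains a point $c_n\in U$ by $(\ref{s-3})$, and each \emph{corridor} $[a_n,b_n]$ is contained in $V\cap X$ by $(\ref{s-5})$ and has length $b_n-a_n=(a_n)^{2/k}$ by $(\ref{s-4})$. Now define $\psi$ to be $\equiv\varepsilon_n$ on the plateau $[b_{n-1},a_n]$, where $\varepsilon_n=1$ for odd $n$ and $\varepsilon_n=0$ for even $n$ (so $\psi\equiv\varepsilon_1$ on $[0,a_1]$), and linear on each corridor $[a_n,b_n]$ from $\varepsilon_n$ at $a_n$ to $\varepsilon_{n+1}$ at $b_n$. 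Then $\psi$ is continuous and $\psi^{-1}\bigl((0,1)\bigr)=\bigcup_n(a_n,b_n)\subseteq V\cap X$.

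Next I verify the two properties. For \emph{Higson subpower}: given $s\in\mathcal P$, fix $T$ with $s(t)<t^{1/(2k)}$ for $t>T$; then for $|x|>T$ the ball $B_d(x,s(|x|))$ has radius $<|x|^{1/(2k)}$, and the rapid growth in $(\ref{s-1})$, $(\ref{s-6})$ and $(\ref{s-7})$ ensures that for $|x|$ large this ball overlaps at most one corridor, on which $\psi$ has slope $1/(a_n)^{2/k}$ with $a_n$ of the same order as $|x|$; hence $\diam\psi\bigl(B_d(x,s(|x|))\bigr)$ is at most of order $|x|^{1/(2k)-2/k}\to0$, so $\psi$ satisfies $(\ast)^{\psi}_{s}$ for every $s\in\mathcal P$. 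For \emph{$\psi^{\ast}(K)=\{0,1\}$}: put $R=\psi^{-1}((0,1))$; since $R\subseteq V$ we get $\overline{R}\cap\nu_P X\subseteq\overline{V}\cap\nu_P X$, which is disjoint from $K$, so $K\cap\overline{R}=\emptyset$. Because $X$ is dense in $h_P(X)$, any point of $h_P(X)$ lying outside $\overline{\psi^{-1}(0)}\cup\overline{\psi^{-1}(1)}$ belongs to $\overline{R}$; hence $K\subseteq\overline{\psi^{-1}(0)}\cup\overline{\psi^{-1}(1)}$, and since the continuous map $\psi^{\ast}$ equals $i$ on $\psi^{-1}(i)$ it equals $i$ on $\overline{\psi^{-1}(i)}$, so $\psi^{\ast}(K)\subseteq\{0,1\}$. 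Both values occur: the set $\{c_n:n\text{ odd}\}$ is infinite with $|c_n|\to\infty$, so by compactness of $h_P(X)$ it has a cluster point $q_1\in\nu_P X$; since the $c_n$ lie in $U$ we get $q_1\in\overline{U}\cap\nu_P X\subseteq K$, and since $\psi(c_n)=1$ for odd $n$ we get $q_1\in\overline{\psi^{-1}(1)}$, hence $\psi^{\ast}(q_1)=1$; symmetrically, the even-indexed $c_n$ give a point $q_0\in K$ with $\psi^{\ast}(q_0)=0$. Thus $\psi^{\ast}(K)=\{0,1\}$ and $K$ is disconnected.

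I expect the crux to be the conflict built into the construction of $\psi$: being Higson subpower forbids $\psi$ from changing appreciably over balls of \emph{any} subpower radius, which forces every ramp of $\psi$ to be polynomially wide, while at the same time every ramp must be confined to the set $V$ (which misses $K$) so that $\overline{R}$ stays away from $K$. Lemma~\ref{L3-1} is precisely what reconciles these demands: it provides corridors $[a_n,b_n]\subseteq V$ of width $(a_n)^{2/k}$, a fixed power of $|x|$ — generous enough for the Higson subpower estimate yet entirely inside $V$ — separated by long plateaus on which $\psi$ is locally constant (hence harmless for that estimate) and which carry the points $c_n\in U\subseteq K$ where $\psi$ is made to take the values $0$ and $1$.
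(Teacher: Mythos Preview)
Your proposal is correct and follows essentially the same route as the paper: choose $U,V$ as you do, invoke Lemma~\ref{L3-1}, define the piecewise-linear $\psi$ that is constant on the plateaus $[b_{n-1},a_n]$ (alternating between $0$ and $1$) and linear on the corridors $[a_n,b_n]\subset V$, extend via Proposition~\ref{P2-1}, and conclude $\psi^{\ast}(K)=\{0,1\}$ from $\psi^{-1}((0,1))\subset V$ together with cluster points of the $c_n\in U$.

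The only substantive difference is in the verification that $\psi$ is Higson subpower. The paper works with balls of radius $t^{1/k}$ and carries out a careful case analysis (its Claims~\ref{clm3-2-1} and~\ref{clm3-2-2}) to obtain the explicit bound $|\psi(t)-\psi(u)|<1/2^{n}$ for $t\in[a_n,a_{n+1})$, using conditions $(\ref{s-1})$, $(\ref{s-6})$, $(\ref{s-7})$ directly. Your argument instead drops to radius $t^{1/(2k)}$ and uses a power-counting estimate $\diam\psi(B)\lesssim |x|^{1/(2k)-2/k}$. This is valid, but the phrase ``$a_n$ of the same order as $|x|$'' hides a small check: when the ball centred at $x$ meets the corridor $[a_m,b_m]$, one needs $a_m\geq c\,|x|$ for some fixed $c>0$, which follows from $b_m=a_m+(a_m)^{2/k}\leq 2a_m$ (using $k\geq 3$) together with $|x|\leq b_m+|x|^{1/(2k)}$. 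With that made explicit your estimate goes through. The paper's sharper bound costs more bookkeeping but pays off later, since Claims~\ref{clm3-2-1} and~\ref{clm3-2-2} are reused verbatim in the proof of Theorem~\ref{T3-6}.
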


\begin{proof}
Let $x \in \mbox{Int}_{\nu_P X}K$ and $y \in \nu_P X\setminus K$.
Let $U$ be an open neighborhood of $x$ in $h_{P} X$
 such that $\overline{U}\cap \nu_P X \subset K$.
Since $y\notin \overline{U}$,
 we can take an open neighborhood $V$ of $y$ in $h_P X$ such that
 $y\in V \subset \overline{V}\subset h_P X \setminus (K\cup \overline{U})$.
By Lemma \ref{L3-1}, 
 there exist a natural number
 $k\geq 3$ and sequences $(a_n)_{n=1}^{\infty}$ and $(b_n)_{n=1}^{\infty}$
 with
$
0=b_0<a_1 <b_1 < \cdots <b_{n-1}<a_n <b_n <\cdots
$
 satisfying the following conditions:
\begin{enumerate}
\item
$(a_n)^{1/k}>2^{n}$,\label{p-1}
\item
$b_{n-1}+(b_{n-1})^{1/k}<a_n$,\label{p-2}
\item
$[b_{n-1}, a_n]\cap U \neq \emptyset$,\label{p-3}
\item
$b_n =a_n +(a_n)^{2/k}$,\label{p-4}
\item
$[a_n, b_n]\subset V\cap X$,\label{p-5}
\item
$a_n -b_{n-1}>2^{n-1}(a_n)^{1/k}$,\label{p-6}
\item
$b_n -a_n > 2^{n}(b_n)^{1/k}$.\label{p-7}
\end{enumerate}

\setcounter{cn}{1}
\begin{clm}\label{clm3-2-1}
Let $u \in B_d(t, t^{1/k})$.
Then we have the following:
\begin{enumerate}
\item[(i)]
If $t\in [a_n, b_n)$ then
 $u\in (b_{n-1},  a_{n+1})$.
\item[(ii)]
If $t\in [b_n, a_{n+1})$ then
 $u\in (a_n, b_{n+1})$.
\end{enumerate}
\end{clm}

\noindent
{\itshape Proof of Claim \ref{clm3-2-1}.}\ 
Suppose that $t \in [a_n, b_n)$.
If $u<t$ then
 $u\geq t-t^{1/k}
 \geq a_n -(a_n )^{1/k}
 \geq a_n -2^{n-1}(a_n)^{1/k}
 >b_{n-1}
 $
 by the monotone increasing-property of the correspondence
 $s\mapsto s-s^{1/k}$ on $(1, \infty)$
 and $(\ref{p-6})$.
If $t<u$ then
 $u \leq t+t^{1/k}
 < b_n +(b_n)^{1/k}
 < a_{n+1}$
 by $(\ref{p-2})$.
Thus the condition (i) is satisfied.

Next suppose that $t\in [b_n, a_{n+1})$.
If $u<t$ then, as above, using $(\ref{p-7})$,
 $u\geq t-t^{1/k}
 \geq b_{n}-(b_{n})^{1/k}
 >b_{n}-2^{n}(b_{n})^{1/k}
 >a_{n}$.
If $u>t$ then
 $u\leq t+t^{1/k}
 < a_{n+1}+(a_{n+1})^{1/k}
 <a_{n+1}+(a_{n+1})^{2/k}
 =b_{n+1}$ by $(\ref{p-4})$.
Thus the condition (ii) is satisfied.
\hfill$\lozenge$\par\medskip

We define $\psi: X\to [0,1]$ by
\begin{align*}
\psi (t)=
\begin{cases}
0, & \mbox{if $b_{2n-2}\leq t <a_{2n-1}$},\\
\dfrac{t-a_{2n-1}}{b_{2n-1}-a_{2n-1}}, & \mbox{if $a_{2n-1} \leq t <b_{2n-1}$},\\
1, & \mbox{if $b_{2n-1}\leq t <a_{2n}$,}\\
\dfrac{b_{2n}-t}{b_{2n}-a_{2n}}, & \mbox{if $a_{2n} \leq t <b_{2n}$}.
\end{cases}
\end{align*}
Then it is easy to see that the function $\psi $ is well-defined and continuous.
Now we shall evaluate the diameter of $\psi (B_d(t, t^{1/k}))$.

\setcounter{cn}{2}
\begin{clm}\label{clm3-2-2}
If $a_{n} \leq t <a_{n+1}$ then $|\psi (t)-\psi (u)|<1/2^{n}$ for every $u \in B_d(t, t^{1/k})$.
\end{clm}

\noindent
{\itshape Proof of Claim \ref{clm3-2-2}.}\ 
Let $u \in B_d(t, t^{1/k})$.
By the definition of $\psi$,
 we check the claim by dividing it into four cases,
 $t\in [a_{2n-1}, b_{2n-1})$,
 $t\in [b_{2n-1}, a_{2n})$,
 $t\in [a_{2n}, b_{2n})$ and
 $t\in [b_{2n}, a_{2n+1})$.
We show here the first two cases.
The other two cases can be shown in a similar fashion.

Suppose that $t\in [a_{2n-1}, b_{2n-1})$.
By Claim \ref{clm3-2-1},
 it suffices to consider the three cases that $u\in (b_{2n-2}, a_{2n-1})$,
 $u\in [a_{2n-1}, b_{2n-1})$
 and $u\in [b_{2n-1}, a_{2n})$.
If $u \in (b_{2n-2}, a_{2n-1})$ then, using $(\ref{p-7})$,
\begin{align*}
|\psi (t)-\psi (u)| &=\psi (t)=\dfrac{t-a_{2n-1}}{b_{2n-1} -a_{2n-1}}
\leq \dfrac{t-u}{b_{2n-1} -a_{2n-1}}\\
 & \leq \dfrac{t^{1/k}}{b_{2n-1} -a_{2n-1}}
 < \dfrac{(b_{2n-1})^{1/k}}{b_{2n-1} -a_{2n-1}}\\
 & <\dfrac{(b_{2n-1})^{1/k}}{2^{2n-1}(b_{2n-1})^{1/k}}
 <\dfrac{1}{2^{2n-1}}.
\end{align*}
If $u \in [a_{2n-1}, b_{2n-1})$ then we have
\begin{align*}
|\psi (t)-\psi (u)| &=\dfrac{|t-u|}{b_{2n-1} -a_{2n-1}}
 \leq \dfrac{t^{1/k}}{b_{2n-1} -a_{2n-1}} 
  <\dfrac{1}{2^{2n-1}}.
\end{align*}
If $u\in [b_{2n-1}, a_{2n})$ then we have
\begin{align*}
|\psi (t)-\psi (u)| &=1-\psi (t)
 =1-\dfrac{t-a_{2n-1}}{b_{2n-1}-a_{2n-1}}
 =\dfrac{b_{2n-1}-t}{b_{2n-1} -a_{2n-1}}\\
 &\leq \dfrac{u-t}{b_{2n-1} -a_{2n-1}}
 \leq \dfrac{t^{1/k}}{b_{2n-1} -a_{2n-1}}
 <\dfrac{1}{2^{2n-1}}.
\end{align*}
Thus $|\psi (t)-\psi (u)|<1/2^{2n-1}$ whenever $t\in [a_{2n-1}, b_{2n-1})$.

Next suppose $t\in [b_{2n-1}, a_{2n})$. 
By Claim \ref{clm3-2-1},
 it suffices to consider the three cases that
 $u\in (a_{2n-1}, b_{2n-1})$,
 $u\in [b_{2n-1}, a_{2n})$
 and $u\in [a_{2n}, b_{2n})$.
Let $u\in (a_{2n-1}, b_{2n-1})$.
Since $u\geq t-t^{1/k} \geq b_{2n-1}-(b_{2n-1})^{1/k} $,
 it follows that
\[
\tag*{$(\dag)$}\label{dag} 
b_{2n-1}-u\leq (b_{2n-1})^{1/k}.
\]
So, using $(\ref{p-7})$ and $\ref{dag}$, we have
\begin{align*}
|\psi (t)-\psi (u)|&=1-\psi (u)
=1-\dfrac{u-a_{2n-1}}{b_{2n-1}-a_{2n-1}}
=\dfrac{b_{2n-1}-u}{b_{2n-1}-a_{2n-1}}\\
&
\leq \dfrac{(b_{2n-1})^{1/k}}{b_{2n-1} -a_{2n-1}}
<\dfrac{(b_{2n-1})^{1/k}}{2^{2n-1}(b_{2n-1})^{1/k}}
<\dfrac{1}{2^{2n-1}}.
\end{align*}
If $u\in [b_{2n-1}, a_{2n})$ then $|\psi (t)-\psi (u)|=1-1=0$.
Finally, let $u\in [a_{2n}, b_{2n})$.
Then, using $(\ref{p-4})$ and $(\ref{p-1})$, we have
\begin{align*}
|\psi (t)-\psi (u)|&=1-\psi(u)
 =1-\dfrac{b_{2n}-u}{b_{2n}-a_{2n}}
 =\dfrac{u-a_{2n}}{b_{2n}-a_{2n}}\\
& < \dfrac{u-t}{b_{2n}-a_{2n}}
 \leq \dfrac{t^{1/k}}{b_{2n}-a_{2n}}
 = \dfrac{t^{1/k}}{(a_{2n})^{2/k}}\\
& < \dfrac{(a_{2n})^{1/k}}{(a_{2n})^{2/k}}
=\dfrac{1}{(a_{2n})^{1/k}}
<\dfrac{1}{2^{2n}}
<\dfrac{1}{2^{2n-1}}.
\end{align*}
Thus $|\psi (t)-\psi (u)|<1/2^{2n-1}$ whenever $t\in [b_{2n-1}, a_{2n})$.
\hfill$\lozenge$\par\medskip

\setcounter{cn}{3}
\begin{clm}\label{clm3-2-3}
The map $\psi : X\to [0,1]$ is Higson subpower.
\end{clm}

\noindent
{\itshape Proof of Claim \ref{clm3-2-3}.}\ 
Let $\varepsilon >0$ and let $s:X\to \mathbb R_+$ be an asymptotically subpower function.
Then there exists $r >0$ such that $s(t)<t^{1/k}$ for every $t>r$.
Let $m$ be a natural number such that $1/2^{m}<\varepsilon/2$.
Put $t_0 =\max \{ r, a_m \}$.
Then,
 for every $t>t_0 $ and $u\in B_d(t, t^{1/k})$,
 we have $B_d(t, s(t))\subset B_d(t, t^{1/k})$
 and $|\psi (t)-\psi (u)|<1/2^m$
 by Claim \ref{clm3-2-2}.
Thus $\diam \psi (B_d(t, s(t)))
\leq \diam \psi (B_d(t, t^{1/k}))\leq 2/2^m <\varepsilon$
 for every $t>t_0$.
Hence $\psi$ is Higson subpower.
\hfill$\lozenge$\par\medskip

By Claim \ref{clm3-2-3} and Proposition \ref{P2-1},
 there exists an extension $\Psi: h_P X\to [0,1]$ of $\psi $.
By $(\ref{p-3})$,
 we obtain a sequence $(c_n)_{n=1}^{\infty}$ such that
 $b_n <c_n <a_{n+1}$ and $c_n \in U$.
Then $\psi (c_{2n})=0$ and $\psi (c_{2n-1})=1$ for every $n$.
Let $z_0 $ and $z_1$
 be cluster points of the sequences $(c_{2n})_{n=1}^{\infty}$ and $(c_{2n-1})_{n=1}^{\infty}$
 respectively.
Then both $z_0$ and $z_1$ are contained in $\overline{U}\cap \nu_P X \subset K$.
It follows from the continuity of $\Psi$ that $\Psi(z_0)=0$ and $\Psi(z_1)=1$.
Hence $z_0 \in \Psi^{-1}(0)\cap K\neq \emptyset$
 and $z_1 \in \Psi^{-1}(1)\cap K\neq \emptyset$.
However, $\Psi^{-1}((0,1))\cap K=\emptyset$.
Indeed, if there exists $z \in K$ such that $\Psi(z )=t\in (0,1)$
 then $z$ has a neighborhood $W \subset h_P X$ such that $\Psi(W)\subset (0,1)$.
Since $K\cap \overline{V}=\emptyset$,
 we may assume $W \cap V=\emptyset$.
Then
 we have $\Psi(W \cap X)=\psi (W \cap X)\subset (0,1)$,
 i.e.,
 $W \cap X \subset \psi^{-1}((0,1))$.
On the other hand, $\psi ^{-1}((0,1))\subset V\cap X$ by $(\ref{p-5})$ and our construction of $\psi $.
So we have $\emptyset \neq W \cap X\subset V\cap X$
 which implies that $W\cap V\neq \emptyset$, a contradiction.
Thus $\Psi(K)=\{ 0, 1\}$, i.e., $K$ is disconnected.
\end{proof}

As we have seen, the function $\psi : X\to [0,1]$ constructed above is Higson subpower.
However it is not Higson sublinear.
Indeed, take sequences $(a_n)_{n=1}^{\infty}$, $(b_n)_{n=1}^{\infty}$ and $k\geq 3$ as in Proposition \ref{P3-2}.
Let $\xi : \mathbb R_+ \to \mathbb R_{+}$ be the function defined by $\xi (t)=t^{2/k}$.
Since $k\geq 3$,
 it follows that $\xi$ is an asymptotically sublinear function.
Since $b_{n}=a_n +(a_{n})^{2/k}$, 
 the closed ball $ B_d(a_{n}, \xi(a_{n}))$ contains at least two points
 $a_{n}$ and $b_{n}$.
By our construction of $\psi$,
 we have $(\psi(a_n), \psi(b_n))\in \{ (0,1), (1,0)\}$,
 i.e., $\diam  \psi (B_d(a_{n}, \xi(a_{n}))) =1$ for every $n$.
Hence, $\psi $ is not Higson sublinear.

\begin{thm}\label{T3-3}
Let $X=[0,\infty)$ be the half open interval with the usual metric.
Then the subpower Higson corona $\nu_P X$ is a non-metrizable indecomposable continuum.
\end{thm}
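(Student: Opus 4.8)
The plan is to deduce the theorem from Proposition \ref{P3-2} together with two soft facts: that $\nu_P X$ is a continuum, and that it is non-metrizable.

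\emph{$\nu_P X$ is a continuum.} Since $X$ is non-compact, $\nu_P X=h_P X\setminus X$ is non-empty; since $X$ is locally compact and dense in $h_P X$ it is open there, so $\nu_P X$ is a closed, hence compact, subspace of the compact Hausdorff space $h_P X$. For connectedness, suppose $\nu_P X=K_1\sqcup K_2$ with $K_1,K_2$ non-empty, disjoint and closed. By normality of $h_P X$ choose disjoint open sets $U_1\supseteq K_1$, $U_2\supseteq K_2$ in $h_P X$. Then $E:=X\setminus(U_1\cup U_2)$ is closed in $X$ and $\mathrm{cl}_{h_P X}(E)\subseteq h_P X\setminus(U_1\cup U_2)$ is disjoint from $\nu_P X=K_1\cup K_2$, so $\mathrm{cl}_{h_P X}(E)$ is a compact subset of $X=[0,\infty)$; hence $E\subseteq[0,R]$ for some $R$. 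Now the connected set $[R,\infty)$ is covered by the disjoint open sets $U_1$ and $U_2$, so it lies entirely in one of them, say $[R,\infty)\subseteq U_1$. Consequently $U_2\cap X\subseteq[0,R)$ is bounded, so $\mathrm{cl}_{h_P X}(U_2\cap X)\subseteq[0,R]\subseteq X$; but $X$ is dense and $U_2$ open, giving $K_2\subseteq U_2\subseteq\mathrm{cl}_{h_P X}(U_2\cap X)\subseteq X$, which contradicts $\emptyset\neq K_2\subseteq\nu_P X$. Hence $\nu_P X$ is connected.

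\emph{$\nu_P X$ is indecomposable.} Suppose $\nu_P X=C_1\cup C_2$ with $C_1,C_2$ proper subcontinua. Each $C_i$ is compact, hence closed in $\nu_P X$, and has non-empty interior: if $\mathrm{Int}_{\nu_P X}C_1=\emptyset$, then $C_2\supseteq\nu_P X\setminus C_1$ would be dense and closed, so $C_2=\nu_P X$, contradicting properness of $C_2$. Applying Proposition \ref{P3-2} to the proper closed subset $C_1$, which has non-empty interior in $\nu_P X$, we conclude that $C_1$ is disconnected, contradicting that $C_1$ is a continuum. Therefore $\nu_P X$ is indecomposable.

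\emph{$\nu_P X$ is non-metrizable.} As recalled in the Introduction (see \cite{Keesling}, \cite{KZ1}), $\nu_P X$ contains a copy of $\beta\mathbb N\setminus\mathbb N$: for a sufficiently rapidly increasing sequence $(x_n)$ in $X$ the set $Y=\{x_n\}$ satisfies $B_d(x_n,s(|x_n|))\cap Y=\{x_n\}$ for all large $n$ and every asymptotically subpower $s$, whence $C_P(Y)=C^\ast(Y)$, so $h_P(Y)=\beta Y$, and by Proposition \ref{P2-4} the space $\beta\mathbb N\setminus\mathbb N\cong\nu_P Y$ is realized as the closed subset $\overline Y\cap\nu_P X$ of $\nu_P X$. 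Since $\beta\mathbb N\setminus\mathbb N$ is not metrizable and metrizability is hereditary, $\nu_P X$ is non-metrizable, and in particular non-degenerate. Assembling the three parts yields the theorem. The only substantive input is Proposition \ref{P3-2}, which is already in hand; within the present proof the one point that genuinely exploits the geometry of $[0,\infty)$ rather than formal properties of compactifications is the connectedness argument, so that is the step to carry out with care.
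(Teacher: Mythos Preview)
Your proof is correct and matches the paper's approach: both derive indecomposability from Proposition~\ref{P3-2} via the observation that each piece of a putative decomposition into proper subcontinua must have non-empty interior, and both invoke the embedded copy of $\beta\mathbb N\setminus\mathbb N$ from \cite{KZ1} for non-metrizability. The only minor difference is that the paper obtains connectedness of $\nu_P X$ more economically by writing it as the decreasing intersection $\bigcap_{n\geq 1}\overline{[n,\infty)}$ of continua, rather than via your direct separation argument.
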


\begin{proof}
Let $A_n =[n, \infty)\subset X$.
Then $\overline{A_n}$ is a continuum and $\overline{A_{n}}\supset \overline{A_{n+1}}$ for every $n\in \Bbb N$.
Note that $\nu_P X=\cap_{n=1}^{\infty} \overline{A_n}$.
Thus $\nu_P X$ is a continuum as the intersection of the decreasing sequence
 $(\overline{A_{n}})_{n=1}^{\infty}$ of continua.
Non-metrizability of $\nu_P X$ follows from the fact that
 $\nu_P X$ contains a copy of $\beta \mathbb N \setminus \mathbb N$
 \cite{KZ1}
 which has the cardinality $2^{ \mathfrak{c}}$  \cite{GJ}.
Finally, if $\nu_P X$ is expressed as the union of two non-degenerate proper closed subsets $K$ and $L$ of $\nu_P X$
 then both of $K$ and $L$ must have non-empty interiors in $\nu_P X$.
Thus both $K$ and $L$ are disconnected by Proposition \ref{P3-2}.
\end{proof}

The following is an example,
 given in \cite{IT},
 of a proper metric on $[0, \infty)$ which derives a decomposable Higson corona.
We show that the same example also derives a decomposable subpower Higson corona.

\begin{ex}\label{E3-4}
There exists a proper metric $\rho$ on the half open interval $[0,\infty)$
 such that the Higson subpower corona is a decomposable continuum.
Indeed, let $f: [0,\infty)\to \mathbb R^2$ be the embedding given by
 $f(t)=(t, t\sin t)$ and let $X=f([0,\infty))$.
Let $\rho$ be the metric on $X$ inherited from $\mathbb R^2$.
We shall show that the Higson subpower corona $\nu_P X$ of $(X, \rho)$
 is a decomposable continuum.
Put
\begin{align*}
Y&=\{ (x,y)\in X \mid y\geq 0\},\\
Z&=\{ (x,y)\in X \mid y\leq 0\},\\
A&=\{ (x,y)\in \mathbb R^2 \mid x\geq 0,\ |y|\leq x\},\\
B&=\{ (x,y)\in A \mid y\geq 0\},\\
C&=\{ (x,y)\in A \mid y\leq 0\}.
\end{align*}
Then $X=Y\cup Z$, $A=B\cup C$, $Y\subset B$ and $Z\subset C$.
For each $A\subset X$,
 $\overline{A}$ denotes the closure of $A$ in the subpower Higson compactification $h_P (X)$ of $X$.

First we note that
\[
\nu_P X =\overline{X}\setminus  X
 =(\overline{Y}\setminus Y)\cup (\overline{Z}\setminus Z).
\]
Indeed, $\overline{X}\setminus  X\supset
 (\overline{Y}\cup \overline{Z}) \setminus (Y\cup Z)
 =(\overline{Y}\setminus Y)\cup (\overline{Z}\setminus Z)$.
If $z\in \overline{X}\setminus  X$ has an open neighborhood $U$
 with $U\cap Y=\emptyset$
 then every neighborhood of $z$ must intersect with $Z$ by the density of $X$ in $\overline{X}$,
 i.e., $z\in \overline{Z}\setminus Z$.

Next we shall show that both $\overline{Y}\setminus Y$
 and $\overline{Z}\setminus Z$ are subcontinua of $\nu_P X$.
By Proposition \ref{P2-4}, $\overline{Y}\setminus Y$ is homeomorphic to $\nu_P Y$
(notation: $\overline{Y}\setminus Y \approx \nu_P Y$)
 and
 $\overline{Z}\setminus Z\approx \nu_P Z$.
It is easy to see that $Y$ is $R$-dense in $B$ for some $R>0$.
Hence $\nu_P Y$ is homeomorphic to $\nu_P B$ by Corollary \ref{C2-5}.
Thus, we have $\overline{Y}\setminus Y \approx \nu_P B$.
Similarly, we have $\overline{Z}\setminus Z \approx \nu_P C$.
For each $n\in \Bbb N$,
 let $B_n=\{ (x,y)\in B \mid  x\geq n\}$.
Then $\nu_P B =\cap_{i=1}^{\infty} \mbox{cl}_{\nu_P B}(B_n)$.
Hence $\nu_P B$ is a continuum 
 as the intersection of the decreasing sequence $(\mbox{cl}_{\nu_P B}(B_n))_{n=1}^{\infty}$
 of continua.
Similarly, it follows that $\nu_P C$ is a continuum.
Thus, both $\overline{Y}\setminus Y$
 and $\overline{Z}\setminus Z$ are subcontinua of $\nu_P X$.

Finally,
 we shall show that both $\overline{Y}\setminus Y$ and $\overline{Z}\setminus Z$
 are proper subcontinua of $\nu_P X$.
Let $D=\{ f(t)\mid t=\pi /2 +2n\pi , n\in \mathbb N \}$ and
 $E=\{ f(t)\mid t=3\pi /2 +2n\pi , n\in \mathbb N \}$.
Then $ \overline{Y}\setminus Y \supset \overline{D} \setminus D\neq \emptyset$.
It is easy to see that 
 the systems $D, Z$ diverges as a power function.
Hence we have $\overline{D} \cap \overline{Z}\cap \nu_P X=\emptyset$ by Proposition \ref{P2-3},
 i.e.,
 $ (\overline{Y}\setminus Y)\setminus  (\overline{Z}\setminus Z) \supset \overline{D} \setminus D \neq \emptyset$.
Similarly, we have
 $ (\overline{Z}\setminus Z)\setminus  (\overline{Y}\setminus Y) \supset \overline{E} \setminus E \neq \emptyset$.
Thus, both
 $\overline{Y}\setminus Y$ and $\overline{Z}\setminus Z$ are proper subcontinua of $\nu_P X$
 with $\nu_P X =(\overline{Y}\setminus Y)\cup (\overline{Z}\setminus Z)$.
Hence $\nu_P X$ is a decomposable continuum.
\end{ex}
\par\medskip

Let $K\subset X$ be a continuum
 and let $a, b\in K$ be distinct two points.
If there is no proper subcontinuum of $K$ containing both $a$ and $b$
 then $K$ is said to be {\it irreducible between $a$ and $b$}.
 
The following lemma was proved by Bellamy \cite[Lemma 1]{Bellamy}
 for the Stone-\v{C}ech compactification of $[0,\infty)$.
The proof is valid for any compactification of $[0,\infty)$.
Here we give a proof for the reader's sake.

\begin{lem}\label{L3-5}
Every compactification $\alpha ([0,\infty))$ of the half-open interval $[0,\infty)$
 is an irreducible continuum between $0$ and every point $z$ of $\alpha ([0,\infty))\setminus [0, \infty)$.
\end{lem}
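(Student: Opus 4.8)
The plan is to argue by contradiction, following the classical strategy for $\beta[0,\infty)\setminus[0,\infty)$. Suppose $K$ is a proper subcontinuum of $\alpha([0,\infty))$ containing both $0$ and some $z\in\alpha([0,\infty))\setminus[0,\infty)$. Since $K\neq\alpha([0,\infty))$, pick a point $w\in\alpha([0,\infty))\setminus K$, and use normality of the compactum $\alpha([0,\infty))$ to separate $w$ from $K$ by a continuous Urysohn function $g:\alpha([0,\infty))\to[0,1]$ with $g(w)=1$ and $g|_K=0$. Restricting $g$ to the dense copy of $[0,\infty)$ gives a bounded continuous $f:[0,\infty)\to[0,1]$ whose extension over $\alpha([0,\infty))$ is $g$. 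The idea is that $f$ must be eventually $0$ along some cofinal set of rays but cannot separate $0$ from $z$ within $[0,\infty)$, and connectedness of $K$ will be contradicted.

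More precisely, first I would record that $0\in K$ forces $f(0)=0$, and that since $z\in K$ is a point of the remainder, $z$ is a cluster point of $[0,\infty)$, so there is a sequence $t_n\to\infty$ in $[0,\infty)$ with $t_n\to z$ in $\alpha([0,\infty))$; continuity of $g$ gives $f(t_n)=g(t_n)\to g(z)=0$. On the other hand $w\notin K$: since $w$ either lies in $[0,\infty)$ (then $f(w)=1$ directly) or in the remainder (then $w$ is a cluster point of some sequence $s_n\to\infty$ with $f(s_n)=g(s_n)\to g(w)=1$). In either case there are arbitrarily large arguments on which $f$ takes values near $1$, while there are also arbitrarily large arguments (the $t_n$) on which $f$ takes values near $0$. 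By the intermediate value theorem on the connected space $[0,\infty)$, for each level $c\in(0,1)$ the set $f^{-1}(c)$ is unbounded: we can find a point in each interval between a ``large, $f$ near $1$'' argument and a ``large, $f$ near $0$'' argument. Hence $f^{-1}(1/2)$ is a closed unbounded subset of $[0,\infty)$, so its closure in $\alpha([0,\infty))$ meets the remainder; that is, $g^{-1}(1/2)\cap\bigl(\alpha([0,\infty))\setminus[0,\infty)\bigr)\neq\emptyset$. But $g^{-1}(1/2)$ is disjoint from $K$ (since $g|_K\equiv 0$) — this alone is not yet a contradiction, so the argument must be sharpened.

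The correct contradiction comes from exploiting connectedness of $K$ itself rather than of $[0,\infty)$. Consider the open sets $\{g<1/3\}$ and $\{g>2/3\}$ in $\alpha([0,\infty))$. Both meet $[0,\infty)$ in unbounded sets (the first contains a tail near each $t_n$, i.e. an unbounded set, and also contains $0$; the second contains arbitrarily large arguments as above). Their complements' trace on $[0,\infty)$, namely $\{1/3\le f\le 2/3\}$, is a closed unbounded subset of $[0,\infty)$ by the intermediate value theorem, hence has a cluster point $p$ in the remainder with $g(p)\in[1/3,2/3]$, so $p\notin K$ and $p\neq z$, $p\neq 0$. Now the actual obstruction — and the step I expect to be the main work — is to show that any such $p$ together with $0$ (or with $z$) cannot be separated off: one shows that every cofinal sequence accumulates only at points ``reachable'' through $K\cup\{p\}$-type sets, using that $[0,\infty)$ is connected and that its image under $e$ is dense. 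The cleanest route is: let $K'$ be the component of $\alpha([0,\infty))$ containing $K$ after we adjoin a tail of $[0,\infty)$; since $[0,\infty)\cup K$ fails to be all of $\alpha([0,\infty))$ only if $g$ separates, and $[0,T]\cup K$ is connected for every $T$ (it is the continuous image of $[0,T]$ union the continuum $K$, which share the point $0$), the closure $\overline{[0,\infty)}\cup K=\alpha([0,\infty))$ is connected and forces, by taking the intersection over tails, that $K$ contains a point of $\bigcap_T \overline{[T,\infty)}=\alpha([0,\infty))\setminus[0,\infty)$ arbitrarily — in fact the chain of connected sets $\overline{[T,\infty)}$ shows $K$ cannot be clopen-separated from any remainder point, so $\alpha([0,\infty))\setminus[0,\infty)\subset K$. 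But then $w$, chosen outside $K$, lies in $[0,\infty)$; taking $T>w$ we get $w\notin\overline{[T,\infty)}$, and $K\cup[0,T]$ is a proper closed connected subset containing $\overline{[T,\infty)}\supset\alpha([0,\infty))\setminus[0,\infty)$ and $[0,T]$, hence equal to $\alpha([0,\infty))$ — contradicting $w\notin K\cup[0,T]$ unless in fact $K=\alpha([0,\infty))$. This is the desired contradiction, proving irreducibility.
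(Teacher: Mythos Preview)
Your proposal does not reach a valid proof. The first half, with the Urysohn function and the intermediate value theorem, is abandoned midway (as you note yourself). In the second half the two decisive steps are unjustified: the claim that the remainder $\alpha([0,\infty))\setminus[0,\infty)$ is contained in $K$ is asserted via a vague appeal to ``the chain of connected sets $\overline{[T,\infty)}$'', but nothing you wrote forces a remainder point into a merely closed (not clopen) set $K$; and even granting that claim, your assertion $K\cup[0,T]\supset\overline{[T,\infty)}$ would require $(T,\infty)\subset K$, which you never established. So the argument collapses before reaching a contradiction.

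The paper's proof bypasses all of this with a single observation. Since $K$ is proper and closed, its complement is nonempty and open, hence meets the dense subspace $[0,\infty)$ at some point $t$. Then $K\cap[0,t)=K\cap[0,t]$ (because $t\notin K$); this set is open in $K$ (as $[0,t)$ is open in $\alpha([0,\infty))$) and closed in $K$ (as $[0,t]$ is compact), nonempty (it contains $0$), and proper (it misses $z$, which lies in the remainder). Hence $K$ is disconnected. The whole apparatus of Urysohn functions, sequences converging to remainder points, and tails $\overline{[T,\infty)}$ is unnecessary---the point $t$ itself already cuts $K$.
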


\begin{proof}
The connectivity of $\alpha ([0,\infty))$ is obvious.
Let $z \in \alpha  ([0,\infty))\setminus [0,\infty)$.
Suppose that there exists a proper closed subset $K$ of $\alpha ([0,\infty))$
 such that $0, z \in K$.
Since $\alpha ([0,\infty)) \setminus K$ is a non-empty open subset of $\alpha ([0,\infty))$,
 there exists a point $t\in [0,\infty)\cap (\alpha ([0,\infty)) \setminus K)$.
Then $U=K\cap [0,t)=K\cap [0,t]$ is a non-empty closed and open subset of $K$
 since $0\in U$.
On the other hand, $z \in K\setminus U\neq \emptyset$,
 that is, $K$ is not connected.
\end{proof}

\begin{thm}\label{T3-6}
Let $X=[0,\infty)$ be the half open interval with the usual metric $d$.
For each $A\in \{ H, P, L\}$,
 there exists a continuous surjection $\xi_{A} : \nu_P X \to h_A (X)$
 from the subpower Higson corona $\nu_P X$ onto the Higson type compactification $h_A (X)$.
\end{thm}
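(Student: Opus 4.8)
The plan is to produce, for each $A\in\{H,P,L\}$, a continuous surjection $\xi_A:\nu_PX\to h_A(X)$. Since $h_L(X)\preceq h_P(X)\preceq h_H(X)$ and $h_H(X)$ maps onto the other two by the comparison maps (which are continuous surjections fixing $X$), it suffices to construct a single continuous surjection $\xi_H:\nu_PX\to h_H(X)$ and then set $\xi_P=(\text{comparison})\circ\xi_H$ and $\xi_L=(\text{comparison})\circ\xi_H$. So the real task is one continuous surjection from $\nu_PX$ onto the Higson compactification $h_H(X)$.

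To build $\xi_H$, I would realize $h_H(X)$ as a metrizable-type target only abstractly; concretely, I use property $(\natural)$ together with Proposition~\ref{P2-1}. The idea, following Bellamy's construction and the map $\psi$ built in Proposition~\ref{P3-2}, is to produce a family of Higson subpower maps on $X$ whose induced map on $h_PX$ restricts on $\nu_PX$ to something that ``sees all of $h_H(X)$''. Precisely: fix a point $z_0\in\nu_HX$ and a homeomorphic copy structure; for each Higson function $f\in C_H(X)$ with $\|f\|\le 1$ I want to manufacture, using a scheme of blocks $[b_{n-1},a_n],[a_n,b_n]$ along $[0,\infty)$ as in Lemma~\ref{L3-1}, a Higson subpower map $\tilde f:X\to[0,1]$ which agrees with $f$ along a cofinal sequence of the $U$-blocks and is ``frozen'' elsewhere. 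Then $\tilde f$ extends over $h_PX$ by Proposition~\ref{P2-1}, and its restriction to $\nu_PX$ recovers the value that $f$'s extension takes at the corresponding point of $\nu_HX$. Assembling these coordinatewise over $f\in C_H(X)$ (equivalently, over the coordinate functions of the embedding $e_{C_H}:X\to\prod_{f} I_f$) yields a map $\nu_PX\to\prod_{f\in C_H(X)}I_f$ whose image contains $e_{C_H}(X)$ on a cofinal set and hence, by compactness and density, contains $\overline{e_{C_H}(X)}=h_H(X)$; its corestriction is the desired surjection $\xi_H$.

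The main obstacle is coherence across the infinitely many coordinates: the block decomposition $(a_n),(b_n)$ and the exponent $k$ produced by Lemma~\ref{L3-1} depend on a chosen pair of open sets, so I cannot use a different decomposition for each $f\in C_H(X)$. I must instead fix one sufficiently fast block scheme on $[0,\infty)$ and then, on the $n$-th relevant block, encode the restriction of $f$ to a window $[b_{n-1},a_n]$ (or a single chosen sample point $c_n$ there) simultaneously for all $f$; the Higson subpower estimate must hold uniformly, which is exactly what the inequalities $(\ref{s-1})$--$(\ref{s-7})$ and Claim~\ref{clm3-2-2} are designed to give, since the oscillation of any Higson $f$ over a ball of radius $t^{1/k}$ already tends to $0$. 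Once uniformity is in hand, continuity of the assembled map is automatic (each coordinate extends continuously), surjectivity follows from the density of a cofinal sample of $X$ together with the fact that the extension of $f$ over $h_HX$ is determined by $f$ on cofinal sets, and compactness of $\nu_PX$ (Theorem~\ref{T3-3}) closes the image. Finally, Lemma~\ref{L3-5} is available to confirm that the constructed surjections are nontrivial (indeed $h_A(X)$ is a nondegenerate continuum), completing the proof for all three cases $A\in\{H,P,L\}$.
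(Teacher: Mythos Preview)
Your reduction to constructing a single surjection $\xi_H:\nu_PX\to h_H(X)$ is correct and matches the paper. But the remainder of the proposal has two genuine gaps.

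First, the coordinatewise assembly does not land where you need it. You propose to build, for each $f\in C_H(X)$, a Higson subpower map $\tilde f:X\to[0,1]$ and then assemble these into a map $\nu_PX\to\prod_f I_f$. Even granting that each $\tilde f$ extends over $h_P(X)$, there is no reason the assembled map has image inside $\overline{e_{C_H}(X)}=h_H(X)$: the coordinates must be coherent (i.e., look like evaluation at a single point of $h_H(X)$), and nothing in your construction forces this. The paper avoids the problem by building \emph{one} map $\theta:X\to X$ with unbounded image, namely $\theta(t)=(2n-1)\psi(t)$ on the block $[b_{2n-2},b_{2n})$, and then proving that $f\circ\theta$ is Higson subpower for every $f\in C_H(X)$ (this is where the oscillation estimate of Claim~\ref{clm3-2-2} is used, together with the Higson property of $f$). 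Coherence is then automatic, since the coordinatewise map is $e_{C_H}\circ\theta$, which certainly lands in $e_{C_H}(X)$; its extension $\Theta:h_P(X)\to h_H(X)$ therefore exists. Your maps $\tilde f$ are bounded, so no such $\theta$ is implicit in your scheme; in particular nothing you build can reach $\nu_HX$.

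Second, your surjectivity argument is not an argument. The phrases ``density of a cofinal sample'' and ``compactness closes the image'' do not show that the image is all of $h_H(X)$; at best they show the image is closed. You invoke Lemma~\ref{L3-5} only to certify ``nontriviality'', but in the paper that lemma is the \emph{entire} surjectivity argument: one exhibits points $p,q\in\nu_PX$ with $\Theta(p)=0\in X$ and $\Theta(q)\in\nu_HX$ (the former from a cluster point of $(c_n)$ with $c_n\in[b_{2n-2},a_{2n-1}]$, the latter from a cluster point of $(d_n)$ with $d_n\in[b_{2n-1},a_{2n}]$, using that $\theta(d_n)=2n-1\to\infty$). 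Then $\Theta(\nu_PX)$ is a subcontinuum of $h_H(X)$ containing both $0$ and a point of the corona, so by irreducibility it equals $h_H(X)$. Without an unbounded $\theta$ you cannot produce such a $q$, and without Lemma~\ref{L3-5} used in this way you have no surjectivity.
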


\begin{proof}
We shall construct a continuous surjection $\xi_{H} : \nu_P X \to h_H (X)$
 from the subpower Higson corona $\nu_P X$ onto the Higson compactification $h_H (X)$.
Then, for each $A\in \{ P, L\}$,
 a required surjection is obtained by the composition
 $\alpha^{H}_{A}\circ \xi_{H}:\nu_P X \to h_A (X)$
 where $\alpha^{H}_{A}: h_{H}(X) \to h_{A}(X)$ is the canonical surjection
 assured by the relation $h_{H}(X)\succeq h_{A}(X)$.

Let $x$ and $y$ be distinct two points in $\nu_P X$.
Let $U$ and $V$ be open subsets of  $h_P (X)$ such that
 $x\in U$, $y\in V$ and $\overline{U}\cap \overline{V}=\emptyset$.
Then, by Lemma \ref{L3-1}, 
there exist a natural number $k\geq 3$ and sequences $(a_n)_{n=1}^{\infty}$ and $(b_n)_{n=1}^{\infty}$
 with
$
0=b_0<a_1 <b_1 < \cdots <b_{n-1}<a_n <b_n <\cdots
$
 satisfying the following conditions:
\begin{enumerate}
\item
$(a_n)^{1/k}>2^{n}$,\label{t-1}
\item
$b_{n-1}+(b_{n-1})^{1/k}<a_n$,\label{t-2}
\item
$[b_{n-1}, a_n]\cap U \neq \emptyset$,\label{t-3}
\item
$b_n =a_n +(a_n)^{2/k}$,\label{t-4}
\item
$[a_n, b_n]\subset V\cap X$,\label{t-5}
\item
$a_n -b_{n-1}>2^{n-1}(a_n)^{1/k}$,\label{t-6}
\item
$b_n -a_n > 2^{n}(b_n)^{1/k}$.\label{t-7}
\end{enumerate}
Then we define $\theta: X\to X$ by
\begin{align*}
\theta (t)=
\begin{cases}
0, & \mbox{if $b_{2n-2}\leq t <a_{2n-1}$},\\
\dfrac{(2n-1)(t-a_{2n-1})}{b_{2n-1}-a_{2n-1}}, & \mbox{if $a_{2n-1} \leq t <b_{2n-1}$},\\
2n-1, & \mbox{if $b_{2n-1}\leq t <a_{2n}$,}\\
\dfrac{(2n-1)(b_{2n}-t)}{b_{2n}-a_{2n}}, & \mbox{if $a_{2n} \leq t <b_{2n}$}.
\end{cases}
\end{align*}
Let $\psi:X\to [0, 1]$ be the map constructed in Proposition \ref{P3-2},
 regarded as a map of $X$ into $X$, i.e., $\psi:X\to X$.
Then
 it follows from our construction that $\theta(t)=(2n-1)\psi (t)$
 whenever $b_{2n-2} \leq t <b_{2n}$.

Let $\ell : \mathbb R_+ \to \mathbb R_+$ be a function defined by
$\ell(t)=t/2^t$.
On should note that $\ell$ is monotonically decreasing on $(1/\log 2, \infty)$.
In particular, $\lim_{t\to \infty}\ell(t)=0$.
Then we have the following.

\setcounter{cn}{1}
\begin{clm}\label{clm3-6-1}
If $n\geq 3$ and
 $t\in [a_{2n-1}, a_{2n+1})$
 then $|\theta (t)-\theta (u)|<\ell(2n-1)$ for every $u \in B_d(t, t^{1/k})$.
\end{clm}

\noindent
{\itshape Proof of Claim \ref{clm3-6-1}.}\ 
Let $u \in B_d(t, t^{1/k})$.
If $t\in [a_{2n-1}, a_{2n})$ then $u \in (b_{2n-2}, b_{2n})$
 by Claim \ref{clm3-2-1}.
Then $\theta (t)=(2n-1)\psi(t)$ and $\theta (u)=(2n-1)\psi (u)$.
By Claim \ref{clm3-2-2}, we have
$|\theta (t)-\theta (u)|=(2n-1)|\psi(t)-\psi(u)|
 <(2n-1)/2^{2n-1}=\ell(2n-1)$.

Suppose that $t\in [a_{2n}, b_{2n})$.
Then $\theta (t)=(2n-1)\psi(t)$.
By Claim \ref{clm3-2-1}, $u \in (b_{2n-1}, a_{2n+1})$.
If $u\in (b_{2n-1}, b_{2n})$ then $\theta (u)=(2n-1)\psi (u)$.
If $u\in [b_{2n}, a_{2n+1})$
 then $\theta (u)=(2n-1)\psi(u)$ since $\theta (u)=\psi(u)=0$.
Hence, we obtain $|\theta (t)-\theta (u)|<\ell(2n-1)$ as above.

Now suppose that $t\in [b_{2n}, a_{2n+1})$.
In this case, $\theta(t)=\psi(t)=0$.
So we have $\theta(t)=(2n-1)\psi(t)$.
By Claim \ref{clm3-2-1}, $u \in (a_{2n}, b_{2n+1})$.
If $u\in (a_{2n}, b_{2n})$
 then $\theta (u)=(2n-1)\psi (u)$.
If $u\in [b_{2n}, a_{2n+1})$
 then $\theta (u)=(2n-1)\psi (u)$
 since $\theta(u)=\psi(u)=0$.
Hence, we have $|\theta (t)-\theta (u)|<\ell(2n-1)$ as above.
Finally, if $u \in [a_{2n+1}, b_{2n+1})$
 then, by the conditions $(\ref{t-4})$ and $(\ref{t-1})$, we have
\begin{align*}
|\theta(t)-\theta (u)|
&=\theta (u)
 =\dfrac{(2n+1)(u-a_{2n+1})}{b_{2n+1}-a_{2n+1}}
 =\dfrac{(2n+1)(u-a_{2n+1})}{(a_{2n+1})^{2/k}}\\
&< \dfrac{(2n+1)t^{1/k}}{(a_{2n+1})^{2/k}}
 <\dfrac{(2n+1)(a_{2n+1})^{1/k}}{(a_{2n+1})^{2/k}}
 =\dfrac{2n+1}{(a_{2n+1})^{1/k}}\\
&<\dfrac{2n+1}{2^{2n+1}}
=\ell (2n+1).
\end{align*}
We note that $\ell(2n+1)<\ell(2n-1)$
 since $\ell$ is decreasing on $(1/\log 2, \infty)$
 and $2n-1\geq 5>1/\log 2\fallingdotseq 3.3$.
Thus we have $|\theta(t)-\theta (u)|<\ell (2n-1)$.
\hfill$\lozenge$\par\medskip

\setcounter{cn}{2}
\begin{clm}\label{clm3-6-2}
If $n\geq 3$ and
 $t\geq a_{2n-1}$
 then $|\theta (t)-\theta (u)|<\ell(2n-1)$ for every $u \in B_d(t, t^{1/k})$.
\end{clm}
\noindent
{\itshape Proof of Claim \ref{clm3-6-2}.}\
Since $\ell$ is monotonically decreasing on $(1/\log 2, \infty)$
 and $2n-1\geq 5>1/\log 2\fallingdotseq 3.3$,
 this is an easy consequence of Claim \ref{clm3-6-1}.
\hfill$\lozenge$\par\medskip

\setcounter{cn}{3}
\begin{clm}\label{clm3-6-3}
For every $f\in C_{H}(X)$, the composition $f\circ \theta : X\to \mathbb R$
 is Higson subpower.
\end{clm}

\noindent
{\itshape Proof of Claim \ref{clm3-6-3}.}\ 
Let $f: X\to \mathbb R$ be a continuous Higson map
 and let $\varepsilon >0$ be given.
If $t\geq a_5$ then 
$
|\theta (t)-\theta(u)|
<\ell(5)
$
 for every $u\in B_d(t, t^{1/k})$ by Claim \ref{clm3-6-2}.
Since $f$ is Higson,
 there exists $K >0$ such that 
\[\tag{a}\label{a}
\diam f(B_d(t, \ell(5) ))<\varepsilon
 \text{ for every } t>K.
\]
By the compactness of the interval $[0, K]$
 and the continuity of $f$,
 we can take $\delta >0$ so that
\[\tag{b}\label{b}
|f(t)-f(u)|<\varepsilon /3 \text{ whenever } t \in [0, K] \text{ and } |t-u|< 3\delta.
\]
Without loss of generality,
 we may assume that $\delta <\ell(5)$.
Since $\lim_{t\to \infty}\ell(t)=0$,
 we can take an integer $n_0 \geq 3$ so that
$\ell (2n_0-1) <\delta/2$.

Let $s: X \to \mathbb R_+$ be an asymptotically subpower function.
We take $t_1 >0$ so that
 $s(t)<t^{1/k}$ for every $t>t_1$.
Then we put $t_0=\max\{ t_1, a_{2n_0 -1}\}$.

Let $t>t_0$.
It follows from $t>t_1$ that
$
\theta (B_d(t, s(t)))
\subset 
\theta (B_d(t, t^{1/k})).
$
Since $t>a_{2n_0 -1}$,
 it follows from Claim \ref{clm3-6-2}
 that
$|\theta (t)-\theta (u)|<\ell (2n_0 -1)$ for every $u\in B_d (t , t^{1/k})$,
 i.e.,
$\theta (B_d(t, t^{1/k}))
\subset B_d (\theta(t), 2\ell (2n_0 -1)).
$
Since $\ell (2n_0-1) <\delta/2$, we have
$
B_d (\theta(t), 2\ell (2n_0 -1))
\subset  B_d (\theta(t), \delta).
$
As a summary, we have
\[
\theta (B_d(t, s(t)))
\subset B_d (\theta(t), \delta).
\]
Suppose that $\theta (t)\leq K$.
It follows from the condition (\ref{b}) that
\[
\diam f\circ \theta (B_d(t, s(t)))
\leq
\diam f( B_d (\theta(t), \delta))
<\varepsilon.
\]
Now suppose that $\theta (t)>K$.
Since $\delta <\ell (5)$,
 we have
$ B_d(\theta(t), \delta)
\subset B_d(\theta(t), \ell (5))
$.
By the condition (\ref{a}), we have
\[
\diam f\circ \theta(B_d(t, s(t)))
\leq
\diam f(B_d(\theta(t), \ell (5)))
<\varepsilon.
\]
Hence $\diam  f\circ \theta (B_d(t, s(t))) <\varepsilon$ for every $t>t_0$.
\hfill$\lozenge$\par\medskip

\setcounter{cn}{4}
\begin{clm}\label{clm3-6-4}
The map $\theta :X\to X$ has an extension $\Theta:h_P (X)\to h_H (X)$.
\end{clm}

\noindent
{\itshape Proof of Claim \ref{clm3-6-4}.}\ 
Let $e:X\to \prod_{f\in C_H (X)} I_f$
 be the evaluation map.
Since $h_H (X)$ is equivalent to $\overline{e(X)}$,
 it suffices to see that
 $e\circ \theta : X\to \prod_{f\in C_H (X)} I_f$ can be extended to
 $h_P (X) \to \overline{e(X)}$.
For every $f\in C_H (X)$,
 the composition $f\circ \theta : X\to I_f$ is Higson subpower by Claim \ref{clm3-6-3},
 so it can be extended to the map
 $\widehat{f\circ \theta}: h_P (X) \to I_f$
 by Proposition \ref{P2-1}.
Hence $e\circ \theta$ can be extended to the map
 $\widehat{e\circ \theta}:h_P (X) \to \prod_{f\in C_P (X)} I_f$
 so that $\widehat{e\circ \theta}(h_P (X))\subset \overline{e(X)}$.
\hfill$\lozenge$\par\medskip

Let $(c_n)_{n=1}^{\infty}$ and $(d_n)_{n=1}^{\infty}$ be sequences such that
 $c_n \in [b_{2n-2}, a_{2n-1}]$
 and
 $d_n \in [b_{2n-1}, a_{2n}]$
 and let $p$ and $q$ be cluster points of $(c_n)_{n=1}^{\infty}$ and $(d_n)_{n=1}^{\infty}$
 respectively.
Put $C= \{ c_n\mid n\in \mathbb N\}$
 and $D=\{ d_n\mid n\in \mathbb N\}$.
Then the system $C, D$ diverges as a power function by (\ref{t-1}) and (\ref{t-7}).
Hence $p\in \overline{C}\cap \nu_P X$ and $q\in \overline{D}\cap \nu_P X$
 are distinct two points in $\nu_P X$.
By Claim \ref{clm3-6-4},
 there exists an extension $\Theta:h_P (X)\to h_H (X)$ of $\theta$.
Then $\Theta (p)=0$ and $\Theta (q)\in \nu_H X$.
Thus $\Theta (\nu_P X)$ is a continuum
 containing $0$ and $\Theta (q)\in \nu_H X$.
So,  $\Theta (\nu_P X)$ must coincide with $h_H (X)$ by Lemma \ref{L3-5}.
Hence the map $\xi_{H}=\Theta|\nu_P X :\nu_P X \to h_H (X)$ is a required surjection.
\end{proof}

\begin{cor}\label{C3-7}
Let $X=[0,\infty)$ be the half open interval with the usual metric $d$.
For each $A\in \{ H, P, L\}$,
 there exists a continuous surjection $\eta_{A} : \nu_H X \to h_A (X)$
 from the Higson corona $\nu_H X$ onto the Higson type compactification $h_A (X)$.
\end{cor}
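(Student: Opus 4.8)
The plan is to deduce Corollary \ref{C3-7} from Theorem \ref{T3-6} by precomposing the surjections $\xi_A$ with a canonical map from the Higson corona onto the subpower Higson corona.

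First I would use the inclusion $C_P(X)\subset C_H(X)$, which gives $h_H(X)\succeq h_P(X)$, to obtain the unique continuous surjection $\pi:h_H(X)\to h_P(X)$ with $\pi|_X=\mathrm{id}_X$.

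Second, I would check that $\pi$ carries the Higson corona $\nu_H X$ onto the subpower Higson corona $\nu_P X$. The point is that a canonical map between compactifications satisfies $\pi^{-1}(X)=X$: if $p\in h_H(X)\setminus X$ and a net $(x_\lambda)$ in $X$ converges to $p$ in $h_H(X)$, then $x_\lambda=\pi(x_\lambda)\to\pi(p)$ in $h_P(X)$; were $\pi(p)$ a point of $X$, this net would converge in $X$, hence also in $h_H(X)$, to a point of $X$, contradicting the Hausdorff property together with the uniqueness of limits. Therefore $\pi(\nu_H X)\subseteq\nu_P X$, and since $\pi$ is surjective with $\pi(X)=X$, we get $\pi(\nu_H X)=\nu_P X$. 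Denote by $\pi_0:\nu_H X\to\nu_P X$ the resulting continuous surjection.

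Finally, for each $A\in\{H,P,L\}$ I would take the continuous surjection $\xi_A:\nu_P X\to h_A(X)$ furnished by Theorem \ref{T3-6} and set $\eta_A:=\xi_A\circ\pi_0:\nu_H X\to h_A(X)$. As a composition of continuous surjections between compact Hausdorff spaces, $\eta_A$ is a continuous surjection, which is exactly what is required. I do not expect any genuine obstacle beyond confirming the standard fact that a canonical map between compactifications restricts to a surjection between their remainders; once Theorem \ref{T3-6} is available, the corollary is essentially bookkeeping.
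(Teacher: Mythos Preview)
Your proposal is correct and follows essentially the same approach as the paper: both take the canonical surjection $h_H(X)\to h_P(X)$ coming from $h_H(X)\succeq h_P(X)$, observe that it restricts to a surjection $\nu_H X\to\nu_P X$, and then compose with the maps $\xi_A$ from Theorem~\ref{T3-6}. The only difference is that you spell out the argument for $\pi(\nu_H X)=\nu_P X$ in more detail, whereas the paper simply invokes density of $X$ in $h_H(X)$ together with $\alpha|_X=\mathrm{id}_X$.
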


\begin{proof}
Let $A\in \{ H, P, L\}$
 and let $\xi_A : \nu_P X \to h_A (X)$
 be a continuous surjection given by Theorem \ref{T3-6}.
By the relation $h_{H}(X)\succeq h_{P}(X)$,
 there is the canonical surjection $\alpha: h_H (X)\to h_{P}(X)$,
 such that $\alpha|_X =\text{id}_X$. 
Since $X$ is dense in $h_{H}(X)$
 and $\alpha|_X =\text{id}_X$,
 it follows that $\alpha (\nu_H X)=\nu_P X$.
Thus the composition
 $\eta_A =\xi_A \circ \alpha |_{\nu_H X} : \nu_H X \to h_A (X)$
 is a required surjection.
\end{proof}

\par\noindent
{\bf Question 1.}
Does Proposition \ref{P3-2} hold for sublinear Higson corona?
In particular, is the sublinear Higson corona of the half open interval with the usual metric
 an indecomposable continuum?
\par\medskip

Let $W$ be a non-degenerate subcontinuum of $\beta [0, \infty)\setminus [0,\infty)$
 and let $M$ be a metric continuum.
It is known \cite{Bellamy} that
 there exist continuous surjections $f: W\to \beta [0,\infty)$ and $g: W\to M$.
\par\medskip

\par\noindent
{\bf Question 2.}
Let $X=[0,\infty)$ be the half open interval with the usual metric $d$.
Let $W$ be a non-degenerate subcontinuum of $\nu_P(X)$
 and let $M$ be a metric continuum.
Do there exist continuous surjections $f: W\to h_P(X)$ and $g: W\to M$?
\par
\medskip
\noindent
{\bf Acknowledgement}
\par
I would like to express my sincere gratitude to the referee
 for his/her comments and valuable suggestions which helped to improve the manuscript.

\par\bigskip

\noindent
Department of Engineering Science,\\
National Institute of Technology,\\
Niihama College, Niihama, 792-8580, Japan\\
\noindent
E-mail address: iwamoto@sci.niihama-nct.ac.jp
%
%%%%%%%%%%%%%%%%%%%%%%%%%%%%%%%%%%%%%%%%%%%%%%%%%%%%%%%%%%%%%%%%%%%%%%%%%%%%%%%%%%%%%%%%%%%%%%%%
\end{document}